\newtheorem{thm}{Theorem}
\newtheorem{defin}{Definition}
\newtheorem{prop}{Proposition}
\newtheorem{cor}{Corollary}
\newtheorem{rem}{Remark}
\newcommand{\goestozero}{\rightarrow 0}
\newcommand{\beeq}{\begin{equation}}
\newcommand{\eeeqnar}{\end{eqnarray*}}
\newcommand{\beeqnar}{\begin{eqnarray*}}
\newcommand{\eeeq}{\end{equation}}
\newcommand{\beit}{\begin{itemize}}
\newcommand{\eeit}{\end{itemize}}
\newcommand{\bedes}{\begin{description}}
\newcommand{\eedes}{\end{description}}
\newcommand{\been}{\begin{enumerate}}
\newcommand{\eeen}{\end{enumerate}}
\newenvironment{dedication}
  {
   \itshape             
   \raggedleft          
  }
\def\ddu2{{\frac{\partial^2}{\partial u^2}}}
\def\Re{{\mathrm{Re}}}
\def\Im{{\mathrm{Im}}}
\def\ZZ {{\mathbb Z}}
\def\RR {{\mathbb R}}
\def\CC {{\mathbb C}}
\def\PP {{\mathbb P}}
\def\HH {{\mathbb H}}
\def\De{\Delta}
\def\Ga{\Gamma}
\def\la{\lambda}
\def\om{\omega}
   \def\cP{{\cal P}} 
    \def\cW{{\cal W}}
\newenvironment{proof}{\medskip
\noindent{\bf Proof: }}{{\hfill$\square$}{\medskip}}
\begin{document}

\title{Spectral construction of non-holomorphic Eisenstein-type series and their Kronecker limit formula}
\author{James Cogdell \and Jay Jorgenson
\footnote{The second named author acknowledges grant support PSC-CUNY.}\and Lejla Smajlovi\'{c}}
\maketitle

\begin{dedication}
\hspace{4cm}
{Dedicated to Emma Previato, on the occasion of her $65$th birthday.}
\end{dedication}

\begin{abstract}\noindent
Let $X$ be a smooth, compact, projective K\"ahler variety and $D$ be a divisor of a holomorphic
form $F$, and assume that $D$ is smooth up to codimension two.  Let $\omega$ be a K\"ahler form
on $X$ and $K_{X}$ the corresponding heat kernel which is associated to the Laplacian
that acts on the space of smooth functions on $X$.  Using various integral transforms of
$K_{X}$, we will construct a meromorphic function in a complex variable $s$ whose special value at $s=0$
is the log-norm of $F$ with respect to $\mu$.  In the case when $X$ is the quotient
of a symmetric space, then the function we construct is a generalization of the so-called
elliptic Eisenstein series which has been defined and studied for finite volume Riemann surfaces.
\end{abstract}

\section{Introduction}

\subsection{Kronecker's limit formula}
The discrete group $\text{PSL}_{2}(\ZZ)$ acts on the upper half plane $\HH$, and the quotient space $\text{PSL}_{2}(\ZZ)\backslash \HH$
has one cusp which can be taken to be at $i\infty$ by identifying $\text{PSL}_{2}(\ZZ)\backslash \HH$ with its fundamental domain.
Associated to the cusp is a non-holomorphic Eisenstein series ${\cal E}^{\mathrm{par}}_{\infty}(z,s)$ which
initially is defined as a Poincar\'e series for $\Re(s) > 1$ but can be shown to admit a meromorphic continuation to
all $s \in \CC$.  One realization of the classical Kronecker limit formula is the
asymptotic expansion that
\begin{align*}
\mathcal{E}^{\mathrm{par}}_{\infty}(z,s)=
\frac{3}{\pi(s-1)}
-\frac{1}{2\pi}\log\bigl(|\Delta(z)|\Im(z)^{6}\bigr)+C+O_{z}(s-1)
\,\,\,\text{\rm as} \,\,\,
s \rightarrow 1
\end{align*}
where $C=6(1-12\,\zeta'(-1)-\log(4\pi))/\pi$.  An elegant proof of Kronecker's limit formula
can be found in \cite{Siegel80}, though the normalization used in \cite{Siegel80} is slightly
different than in \cite{JST16} from which we quote the above formulation.
The series ${\cal E}^{\mathrm{par}}_{\infty}(z,s)$ has a well-known functional equation which
allows one to restate Kronecker's limit formula as
\begin{equation*}
\mathcal{E}^{\mathrm{par}}_{\infty}(z,s)=
1+ \log\bigl(|\Delta(z)|^{1/6}\Im(z)\bigr)s+O_{z}(s^2)
\,\,\,\text{\rm as} \,\,\,
s \rightarrow 0.
\end{equation*}

There are many results in the mathematical literature which develop and explore analogues of Kronecker's limit formula.
One particularly motivating study is given in \cite{KM79} in which the authors define a non-holomorphic hyperbolic
Eisenstein series ${\cal E}^{\mathrm{hyp}}_{\gamma}(z,s)$ associated to any hyperbolic subgroup, generated by a
hyperbolic element $\gamma$,
of an arbitrary co-finite discrete subgroup $\Gamma$ of $\text{PSL}_{2}(\RR)$.  The Kronecker limit formula obtained in
\cite{KM79} states that the Poincar\'e series which defines ${\cal E}^{\mathrm{hyp}}_{\gamma}(z,s)$ admits a meromorphic continuation to $s\in \CC$ and the value of ${\cal E}^{\mathrm{hyp}}_{\gamma}(z,s)$ at $s=0$ is, in effect, the harmonic one-form
which is dual to the geodesic on $\Gamma \backslash \HH$ associated to $\gamma$.

Abelian subgroups of discrete groups $\Gamma$ which act on $\HH$ are classified as parabolic, hyperbolic and
elliptic, so it remained to define and study non-holomorphic Eisenstein series associated to any elliptic
subgroup of an arbitrary discrete group
$\Gamma$.  Any elliptic subgroup can be viewed as the stabilizer group of a point $w$ on the quotient $\Gamma \backslash \HH$,
where in all but a finite number of cases the elliptic subgroup consists solely of the identity element of $\Gamma$.  One can
envision the notion of a non-holomorphic elliptic Eisenstein series ${\cal E}^{\textrm{ell}}_{w}(z,s)$ which, if the above
examples serve as forming a pattern, will admit a mermorphic continuation and whose special value at $s=0$ will be associated to a
harmonic form of some type specified by $w$.  Indeed, such series were studied in \cite{vP10} and, in fact, the Kronecker limit function
is the log-norm of a holomorphic form which vanishes only at $w$.

\subsection{A unified approach}
The article \cite{JvPS16} developed a unified construction of the hyperbolic, elliptic and parabolic Eisenstein series
mentioned above for any finite volume quotient of $\HH$; of course, if the quotient is compact, then parabolic Eisenstein
series do not exist.
  The goal of \cite{JvPS16} was to devise a means, motivated by a type of pre-trace formula, so that the various
Eisenstein series could be obtained by employing different test functions.  As one would expect, there were numerous
technical considerations which arose, especially in the case when $\Gamma$ was not co-compact.  In the end, one can view
the approach developed in \cite{JvPS16} as starting with a heat kernel, and then undertaking a sequence of integral transforms
until one ends up with each of the above mentioned Eisenstein series.  Whereas the article \cite{JvPS16} did provide a unified
approach to the construction of parbolic, hyperbolic and elliptic Eisenstein series for hyperbolic Riemann surfaces,
the analysis did employ the geometry of $\textrm{\rm SL}_{2}(\RR)$ quite extensively.

\subsection{Our results}
The goal of the present paper is to understand the heat kernel construction of non-holomorphic elliptic Eisenstein series
in a more general setting.  We consider a smooth, complex, projective variety $X$ of complex dimension $N$.  We
fix a smooth K\"ahler metric on $X$, which we denote by the $(1,1)$ form $\omega$.  In general terms, let us now describe the
approach we undertake to define and study what we call elliptic Eisenstein series.

Let $t$ be a positive real variable, and let $z$ and $w$ be points on $X$.
Let $K_{X}(z,w;t)$ be the heat kernel acting on smooth functions on $X$ associated to the Laplacian $\Delta_{X}$ corresponding to
$\omega$; see, for example, \cite{Ch84} or \cite{BGV91}.
One of the key properties of $K_{X}(z,w;t)$ is that it satisfies the heat equation, meaning that
$$
\left(\Delta_{z} + \partial_{t}\right)K_{X}(z,w;t) = 0.
$$
We compute the integral transform in $t$ of $K_{X}(z,w;t)$ after multiplying by a function $G(t,u)$ which satisfies
the differential equation $(\partial_{t} - \partial^{2}_{u})G(t,u) = 0$.  By what amounts to integration by parts, we
get a function $(K_{X}\ast G)(z,w;u)$ which satisfies the equation
$$
\left(\Delta_{z} - \partial^{2}_{u}\right)(K_{X}\ast G)(z,w;u) = 0.
$$
If one formally replaces $u$ by $iu$, one gets the kernel function associated to the wave equation.  However, this substitution
is only formal because of convergence considerations; nonetheless, one is able to use the language of distributions in order to achieve the desired result which is
to obtain a wave kernel $W_{X}(z,w;u)$.  At this point, one would like to integrate the wave kernel against the
test function $(\sinh u)^{-s}$ for a complex variable $s$ to yield, as in \cite{JvPS16}, the elliptic Eisenstein series.  Again,
however, technical problems occur because of the vanishing of $\sinh (u)$ when $u=0$.  Instead, we integrate the wave kernel against
$(\cosh u)^{-s}$, for which there is no such technical issue.  We then replace $s$ by $s+2k$ and sum over $k$, in a manner dictated by,
of all things, the binomial theorem, thus allowing us to mimic the use of $(\sinh u)^{-s}$.  In doing so, we arrive at the
analogue of the elliptic Eisenstein series $E_{X}(z,w;s)$, where $s$ is a complex variable, initially required to have real part $\textrm{\rm Re}(s)$
sufficiently large, $z$ is a variable on $X$, and $w$ is a fixed point on $X$.  Though $w$ may be referred to as the elliptic point, it is,
in the case $X$ is smooth, simply a chosen point on $X$.

As a final step, we let $D$ be the divisor of a holomorphic form $F$ on $X$,
and assume that $D$ is smooth up to codimension two.  We show that the integral of $E_{X}(z,w;s)$ with respect of the metric $\mu_{D}(w)$
on $D$ induced from the K\"ahler form $\omega$ has an expansion in $s$ at $s=0$, and the second order term in $s$ is the log-norm of $F$.
This result is the analogue of the classical Kronecker limit formula.

Thus far, all results are obtained by using the spectral expansion of the heat kernel associated to the Laplacian $\Delta_{X}$.
We can equally well reconsider all of the above steps for the operator $\Delta_{X}-Z$ for any complex number $Z$, in which case
we do not begin with the heat kernel $K_{X}(z,w;t)$ but rather we begin with $K_{X}(z,w;t)e^{-Zt}$.
If, for whatever reason, there is  a means by which we have another expression for the heat kernel,
and also have a compelling reason to choose a specific $Z$, then we may end up with another expression
for $E_{X}(z,w;s)$.  Such a situation occurs when, for instance,  $X$ is the quotient of a symmetric space $G/K$
by a discrete group $\Gamma$.  In this case, the heat kernel can be obtained as the inverse spherical transform of an exponential function.
In that setting, it is natural to take $Z = -\rho^{2}_{0}$ where $\rho_{0}$ is essentially the norm, with respect to the Killing form, of half the sum of the positive roots. (In the notation of Gangoli \cite{Ga68}, our $\rho_0$ would be his $|\rho_*|$.)
Finally, we note that one can, without loss of generality, re-scale the time variable $t$ by a positive constant $c$,
so then all begins with the function $K_{X}(z,w;t/c)e^{-Zt/c}$.  In the development of our results, it will be evident
that it is necessary to both translate the Laplacian $\Delta_{X}$ and re-scale time $t$, where it will become
evident that as long as $\rho^{2}_{0} \neq 0$, then it is natural to take $c = 1/(4\rho^{2}_{0})$, which would have the effect of scaling $\rho_0$ to be $1/2$, or $\rho_0^2$ to be $1/4$. (See section 2.7 below.)

The full development of these considerations in all instances would take a considerable amount of time and space, so for the purpose of
the present article we will focus on the results obtainable by considering the spectral decomposition of the heat kernel
in the case of a compact K\"ahler variety $X$.  However, it is possible to give an indication of what will follow
when an additional expression for the heat kernel is available.  For instance, if $X$ is an abelian variety, we obtain
an expression for the heat kernel on $X$ by viewing $X$ as a complex torus.  As an example of our analysis, we can take
$D$ to be the divisor of the Riemann theta function $\theta$, so then our construction expresses the log-norm of the
Riemann theta function $\theta$ as a type of Kronecker limit function.

\subsection{Outline of the paper}
The article is organized as follows.  In section 2 we establish notation and recall some known results.
In section 3, we define the wave distribution associated to a certain space of smooth functions on $X$.  In
section 4 we apply the wave distribution to the test function $\cosh^{-(s-\rho_{0})}(u)$, for a suitably chosen constant
$\rho_{0}$, yielding a function $K_{X;\rho_{0}^{2}}(z,w;s)$.
In section 5 we define two series formed from $K_{X;\rho_{0}^{2}}(z,w;s)$, one producing a formula for the resolvent kernel $G_{X;\rho_{0}^{2}}(z,w;s)$,
which is the integral kernel that inverts the operator $\Delta_{X}+s(s-\rho_{0})$.  The second series $E_{X;\rho_{0}^{2}}(z,w;s)$
is the analogue of the elliptic Eisenstein series.  The analogue of Kronecker's limit formula is given in section 6.  Finally, in
section 7, we conclude with some examples.  In our opinion, each example is of independent interest.  Admittedly, the discussion in section 7 is somewhat speculative; however, we elected to include the discussion in an attempt to illustrate some of the directions
we believe our results can apply.

\it In an unavoidable mishap of notation, the heat
kernel on $X$ is denoted by $K_{X}(z,w;t)$, and the function obtained by applying the wave distribution to $\cosh^{-(s-\rho_{0})}(u)$
is $K_{X;\rho_{0}^{2}}(z,w;s)$.  Similarly, $\Gamma$ will sometimes signify the Gamma function and sometimes signify a discrete
group acting on a symmetric space.  In each case, the meaning will be clear from the context of the discussion. \rm

\section{Background material}
\label{s.Background}

In this section we establish notation and state certain elementary results which
will be used throughout the article.  The contents in this section are given in no particular
order of importance.

\subsection{Stirling's approximation}

Stirling's approximation for the logarithm $\log \Gamma(s)$ of the classical gamma function is
well-known, and we will use the form which states that
\begin{equation}\label{e.Stirling}
\log \Gamma(s) = s\log (s) - s +\frac{1}{2}\log (2\pi/s) + \sum\limits_{n=1}^{M}\frac{B_{2n}}{2n(2n-1)s^{2n-1}} + h_M(s),
\end{equation}
where $B_{n}$ is the $n$-th Bernoulli number and $h_M(s)$ is a holomorphic function in the half-plane $\Re(s)\gg0$ and
$h_M(s)=O_{M}(s^{-2M-1})$ as $s \rightarrow \infty$.
The proof of (\ref{e.Stirling}) is found in various places in the literature; see, for example, \cite{JLa93}.
Going further, the proof from \cite{JLa93} extends to show that one can, in effect, differentiate the above
asymptotic formula.  More precisely, for any integer $\ell \geq 0$, one has that
\begin{equation}\label{e.Stirling_derivative}
\partial_{s}^{\ell} \log \Gamma(s) = \partial_{s}^{\ell}\left(s\log (s) - s +\frac{1}{2}\log (2\pi/s) + \sum\limits_{n=1}^{M}\frac{B_{2n}}{2n(2n-1)s^{2n-1}}\right) + \partial_{s}^{\ell}h_M(s),
\end{equation}
where $\partial_{s}^{\ell}h_M(s)=O_{M,\ell}(s^{-2M-\ell-1})$, as $s \rightarrow \infty$.

We will use the notational convenience of the Pochhammer symbol $(s)_{n}$, which
is defined as
$$
(s)_{n} := \frac{\Gamma(s+n)}{\Gamma(s)}.
$$

\subsection{Elementary integrals}

For any real number $r \in \RR$ and complex number $\nu$ with $\textrm{Re}(\nu)>0$, we
have, from $3.985.1$ of \cite{GR07} the integral formula
\beeq
\label{e.GR07-3.9815.1}
\int_0^\infty \cos(u r) \cosh^{- \nu}(u) \, du =
\frac{2^{\nu - 2} }{\Ga(\nu)} \Ga\left(  \frac{\nu - ir}{2}\right)  \Ga\left(  \frac{\nu + ir}{2}\right).
\eeeq
If $r = 0$, then we get the important special case that
\beeq
\label{e.GR07-3.512.2}
\int_0^\infty  \cosh^{- \nu}(u) \, du
=
\frac{2^{\nu - 2} \Ga^{2}(\nu/2)}{\Ga(\nu)},
\eeeq
which is stated in $3.512.1$ of \cite{GR07}.  Additionally, we will use that
for any $r\in \CC$ with $\textrm{Re}(r^{2})>0$ and $u\in \CC$ with $\textrm{Re}(u)> 0$, one has that
\beeq\label{heat_to_Poisson}
\frac{u}{\sqrt{4\pi}} \int_0^\infty
 e^{-r^2 t} e^{-u^2/(4t)} t^{-1/2} \, \frac{dt}{t} = e^{|r| u}.
\eeeq

For any real valued function $g$ and $r\in \mathbb C$, we define
$H(r, g)$ as
\beeq \label{d.Hrg}
H(r,g) := 2 \int_0^\infty \cos (ur) g(u) \, du.
\eeeq

\noindent
This is a purely formal definition; the conditions on $g$ under which we
consider $H(r,g)$ are stated in section 3 below.

\subsection{An asymptotic formula}

For the convenience of the reader, we state here a result from page 37 of \cite{Er56}.
Let $(\alpha , \beta) \subset \RR$.  Let $g$ be a real-valued continuous function, and $h$ be
a real-valued continuously differentiable function, on $(\alpha, \beta)$
such that the integral
$$
\int\limits_{\alpha}^{\beta}g(t)e^{xh(t)}dt
$$
exists for sufficiently large $x$.  Assume there is an $\eta > 0$ such that $h'(t) < 0$
for $t \in (\alpha, \alpha+\eta)$.  In addition, for some $\epsilon > 0$, assume that
$h(t) \leq h(\alpha) - \epsilon$ for $t \in (\alpha+\eta, \beta)$.
Suppose that
$$
h'(t) = -a(t-\alpha)^{\nu-1} + o((t-\alpha)^{\nu-1})
\,\,\,\,\,
\textrm{and}
\,\,\,\,\,
g(t) = b(t-\alpha)^{\lambda-1} + o((t-\alpha)^{\lambda-1})
\,\,\,\,\,
\textrm{as $t \rightarrow \alpha^{+}$}
$$
for some positive $\lambda$ and $\nu$.  Then
\begin{equation}\label{e.integral_asymp}
\int\limits_{\alpha}^{\beta}g(t)e^{xh(t)}dt = \frac{b}{\nu}\Gamma(\lambda/\nu)(\nu/(ax))^{\lambda/\nu}e^{x h(\alpha)}\left(1+o(1)\right)
\,\,\,\,\,
\textrm{as $x \rightarrow \infty$.}
\end{equation}
Note that the assumptions on $h$ hold if $\alpha=0$, $h(0)= 0$ and is monotone decreasing, which is the setting in which we will apply the above result.  In this case, we will use that the above integral is $O(x^{-\lambda/\nu})$.

\subsection{Geometric setting}
Let $X$ be a compact, complex, smooth projective variety of complex dimension $N$.
Fix a smooth K\"ahler metric $\mu$ on $X$, which is associated to the K\"ahler $(1,1)$ form $\omega$.
Let $\rho$ denote
a (local) potential for the metric $\mu$. If we choose local holomorphic
coordinates $z_{1}, \dots, z_{N}$ in the neighborhood of a point on $X$, then one can write $\omega$
as
$$
\omega = \frac{i}{2} \sum_{j,k=1}^{N}g_{j,\bar{k}}dz_{j}\wedge d\bar{z}_{k}=\frac{i}{2}\partial_{z}\partial_{\bar{z}}\rho
$$
If $M$ is a subvariety of $X$, the induced metric on $M$ will be denoted by $\mu_{M}$.  In particular, the
induced metric on $X$ itself is $\mu_{X}$, which we will simply write as $\mu$.  In a slight abuse of notation,
we will also write $\mu_{M}$, or $\mu$ in the case $M=X$, for the associated volume form against which one
integrates functions.

The corresponding Laplacian $\Delta_X$ which acts on smooth functions on $X$ is
$$
\Delta_X = -\sum_{j,k=1}^{N}g^{j,\bar{k}}\frac{\partial^{2}}{\partial z_{j}\partial \bar{z}_{k}},
$$
where, in standard notation,  $(g^{j,\bar{k}}) = (g_{j,\bar{k}})^{-1}$; see page 4 of \cite{Ch84}.
An eigenfunction of the Laplacian $\Delta_{X}$ is an \it a priori \rm $C^{2}$ function $\psi_{j}$
which satisfies the equation
$$
\De_X \psi_{j} - \lambda_{j} \psi_{j}= 0
$$
for some constant  $\lambda_{j}$, which is the eigenvalue associated to $\psi_{j}$.  It is well-known that any
eigenfunction is subsequently smooth, and every eigenvalue is greater than zero except when
$\psi_{j}$ is a constant whose corresponding eigenvalue is zero.  As is standard, we
assume that each eigenfunction is normalized to have $L^{2}$ norm equal to one.

Weyl's law asserts that
$$
\#\{\lambda_{j} | \lambda_{j} \leq T\} = (2\pi)^{-2N}\textrm{vol}_{N}(\mathbb{B})\textrm{vol}_{\omega}(X)T^{N} + O(T^{N-1/2})
\,\,\,\,\,\textrm{as $T \rightarrow \infty$}
$$
and
$\textrm{vol}_{\omega}(X)$ is the volume of $X$ under the metric $\mu$ induced by $\omega$, and
$\textrm{vol}_{N}(\mathbb{B)}$ is the volume of the unit ball in $\mathbb{R}^{2N}$.
As a consequence of Weyl's law, one has that for any $\varepsilon > 0$,
\begin{equation}\label{e.series_conv}
\sum\limits_{k=1}^{\infty} \lambda_{k}^{-N-\varepsilon} < \infty;
\end{equation}
see, for example, page 9 of \cite{Ch84}.  The eigenfunction $\psi_j$ corresponding to the eigenvalue $\lambda_j$
satisfies a sup-norm bound on $X$, namely that
\begin{equation}\label{e.sup_norm}
\Vert \psi_{j} \Vert_{\infty} =O_{X}\left(\lambda_{j}^{N/2-1/4}\right);
\end{equation}
see \cite{SZ02} and references therein.

\subsection{Holomorphic forms}\label{s.forms}
By a holomorphic form $F$ we mean a holomorphic section of a power of the canonical bundle $\Omega$ on $X$; see page 146 of \cite{GH78}.
(Note: On page 146 of \cite{GH78}, the authors denote the canonical bundle by $K_{X}$, which we will not since this notation
is being used both for the heat kernel and the function obtained by applying the wave distribution to hyperbolic cosine.)
The weight $n$
of the form equals the power of the bundle of which $F$ is a section. Let $D$ denote the divisor of $F$, and
assume that $D$ is smooth up to codimension two.  In the
case $X$ is a quotient of a symmetric space $G/K$ by a discrete group $\Gamma$, then $F$ is a holomorphic automorphic form
on $G/K$ with respect to $\Gamma$. With a slight gain in generality, and with no increase in complication of the analysis,
we can consider sections of the canonical bundle obtained by considering the tensor product of the canonical bundle with a
flat line bundle.  The K\"ahler form $\omega$ will induce a norm on $F$, which we denote by $\Vert F \Vert_{\omega}$;
see \cite{GH78} for a general discussion as well as section 2 of \cite{JK01}.  We can describe the norm as follows.

As in the notation of section 2 of \cite{JK01}, let $U$ be an element of an open cover of $X$.  Once we trivialize $\Omega$ on $U$,
we can express the form $F$ in local coordinates $z_{1}, \dots, z_{N}$.  Also, we have the existence of a K\"ahler potential
$\rho$ of the K\"ahler form $\omega$.  Up until now, there has been no natural scaling of $\omega$.  We do so now, by scaling
$\omega$ by a multiplicative constant $c$ so that $c\omega$ is a Chern form of $\Omega$; see page 144 of \cite{GH78} as well as
chapter 2 of \cite{Fi18}.   In a slight abuse of notation, we will denote the re-scaled K\"ahler form by $\omega$.

With this scaling of $\omega$, one can
show that $\vert F(z)\vert e^{-n\rho(z)}$ is invariant under change of coordinates; see section 2.3 of \cite{JK01}.  With
this, one defines
\begin{equation}\label{e.Fnorm}
\Vert F \Vert_{\omega}^{2}(z) :=\vert F(z)\vert^{2} e^{-2n\rho(z)},
\end{equation}
where $n$ is the weight of the form.  The formula is local for each $U$ in the open cover, but its invariance implies that the definition extends independently of
the various choices made.  Following the discussion of Chapter 1 of \cite{La88}, the above equation can be
written in differential form as
\begin{equation}\label{ddc}
\textrm{\rm d}\textrm{\rm d}^{c}\log \Vert F \Vert_{\omega}^{2} = n(\delta_{D} - \omega)
\end{equation}
where $\delta_{D}$ denotes the Dirac delta distribution supported on $D$.

K\"ahler metrics have the property that the associated Laplacian of a function does not involve derivatives of
the metric, as stated on page 75 of \cite{Ba06}.   Exercise 1.27.3(a) of \cite{Fi18} states the formula
\begin{equation}\label{e.omega.Laplacian}
\frac{1}{2}\Delta_{X,d}f \omega^{n} = n i \partial \bar{\partial}f \wedge \omega^{n-1}
\end{equation}
where $\Delta_{X,d}$ is the Laplacian stemming from the differential $d$ and $f$ is a smooth function, subject
to the normalizations of various operators as stated in \cite{Fi18}.  As
a corollary of (\ref{e.omega.Laplacian}), one interpret (\ref{ddc}) as asserting that
$\Delta_{X}\log \Vert F \Vert_{\omega}^{2}$ is a non-zero constant away from $D$.

\subsection{The heat and Poisson kernel}

The heat kernel acting on smooth functions on $X$ can be defined formally as
$$
K_X(z, w; t) = \sum_{k = 0}^\infty e^{-\la_k t} \psi_k(z) \overline{\psi_k}(w),
$$
where $\{ \psi_j\}$ are eigenfunctions associated to the eigenvalue $\la_j$.
As a consequence of Weyl's law and the sup-norm bound for eigenfunctions, the series
which defines the heat kernel converges for all $t > 0$ and $z, w \in X$.  Furthermore,
if $z \neq w$, then the heat kernel has exponential decay when $t$ approaches zero;
see page 198 of \cite{Ch84}.

For any $Z \in \CC$ with $\Re(Z) \geq 0$, the {\em translated by $-Z$ Poisson Kernel }
$\cP_{X, -Z}(z, w; u)$, for $z, w \in X$ and $u \in \CC$ with $\Re(u) \geq 0$ is defined by
\beeq
\label{e.DefPoissonKernel}
\cP_{X, -Z}(z, w; u) = \frac{u}{\sqrt{4\pi}} \int_0^\infty
K_X(z, w; t) e^{-Zt} e^{-u^2/(4t)} t^{-1/2} \, \frac{dt}{t}.
\eeeq
The translated Poisson kernel $\cP_{X, -Z}(z, w; u)$ is a
fundamental solution associated to the differential operator $\De_X + Z - \partial^2_u$.  For certain
considerations to come, we will choose a constant $\rho_{0} \geq 0$, which will depend on
the geometry of $X$, and write each eigenvalue of $\De_X $ as $\la_j = \rho_0^2 + t_j^2$.
Thus, we divide the spectral expansion of the heat kernel $K_X$ into two subsets: The finite sum for $\la_j < \rho_0^2$, so then $t_j \in (0, i\rho_0]$, and the sum over $\la_j \geq \rho_0^2$, so then $t_j\geq 0$.
Using (\ref{heat_to_Poisson}), we get the spectral expansion
\beeq
\label{e.PoissonSpectralExpansion}
\cP_{X, -Z}(z, w; u) = \sum_{\la_k < \rho_0^2} e^{-u\sqrt{\la_k + Z} } \psi_k(z) \overline{\psi}_k(w) +
\sum_{\la_k \geq \rho_0^2} e^{-u\sqrt{\la_k + Z} } \psi_k(z) \overline{\psi}_k(w) .
\eeeq

\noindent
By Theorem 5.2 and Remark 5.3 of \cite{JLa03}, $\cP_{X, -Z}(z, w; u) $
admits an analytic continuation to $Z = - \rho_0^2$.  In analogy with
\cite{JvPS16}, we can deduce that
 that the continuation of
$\cP_{X, -Z}(z, w; u) $ for $Z = - \rho_0^2$, with $\Re(u) > 0$ and $\Re(u^2) > 0$
is given by
\beeq
\label{e.AnalPoisson}
\cP_{X, \rho_0^2}(z, w; u) = \sum_{\la_k < \rho_0^2} e^{-u\sqrt{\la_k - \rho^2_0} } \psi_k(z) \overline{\psi}_k(w) +
\sum_{\la_k \geq \rho_0^2} e^{-u{t_k} } \psi_k(z) \overline{\psi}_k(w),
\eeeq
where $\sqrt{\la_k - \rho_0^2} = t_k \in (0, i\rho_0]$ is taken to be the branch of
the square root obtained by analytic continuation through the upper half-plane.

As stated, if $z \neq w$, then the heat kernel has exponential decay as $t$ approaches zero.  From this,
one can show that the Poisson kernel $\cP_{X, -Z}(z, w; u)$ is bounded as $u$ approaches zero for any $Z$.

At this point, we would like to define the (translated by $\rho_0^2$) wave kernel
by defining
$$
W_{X, \rho_0^2}(z, w; u) = \cP_{X, \rho_0^2}(z, w; iu) + \cP_{X, \rho_0^2}(z, w; -iu),
$$
for some branch of the meromorphic continuation of $\cP_{X, \rho_0^2}(z, w; u)$
to all $u \in \CC$.
However, because of convergence issues, we cannot simply replace $u$ by $iu$ in the expression
for the Poisson kernel.  As a result, we define the wave distribution via the spectral expansion
for the analytic continuation of $\cP_{X, \rho_0^2}$ in (\ref{e.AnalPoisson}).

\subsection{An elementary, yet important, rescaling observation}\label{s. rescaling}

By writing the Laplacian as in the beginning of section 2.3, we have established specific conventions
regarding various scales, or multiplicative constants, in our analysis.  However, there is
one additional scaling which could be considered.  Specifically, one could consider the heat equation
$\Delta_{z} + c \partial_{t}$ for any positive constant $c$.  The associated heat kernel would be
$K_X(z, w; t/c)$, if the heat kernel associated to $\Delta_{z} + \partial_{t}$ is $K_X(z, w; t)$.
In doing so, we would replace (\ref{e.DefPoissonKernel}) by
\beeq
\label{e.DefPoissonKernel2}
\cP_{X, -Z}(z, w; u) = \frac{u}{\sqrt{4\pi}} \int_0^\infty
K_X(z, w; t/c) e^{-Zt/c} e^{-u^2/(4t)} t^{-1/2} \, \frac{dt}{t}.
\eeeq
for some positive constant $c$.  In effect, we are changing the parameterize for
the positive real axis $\RR^{+}$ from the parameter $t$ to $t/c$ for any positive constant $c$.
In this manner, we rescale the data from the beginning of our consideration so that when we study the
translation of the heat kernel, we can, provided $\rho_{0}^{2} > 0$, choose $c$ appropriately so that
translation $\rho_{0}^{2}/c$ is always equal to $1/4$.

In the examples we develop, the choice of the
$\rho_{0}^{2}$ will be determined by a ``non-spectral'' representation of the heat kernel, after which
we choose $c = 1/(4\rho_{0}^{2})$, provided $\rho_{0} \neq 0$.  As it turns out, the translation
by $1/4$ matters.  This point will become relevant in section 6 below.

\section{The wave distribution}
\label{s.WaveDistribution}

For $z, w \in X$ and function $g \in C^{\infty}_{c}(\RR^+)$,
we formally define the wave distribution $\cW_{X,\rho_0^2}(z, w)(g)$
applied to $g$ by the series

\begin{equation}
\label{wave d. series}
\cW_{X,\rho_0^2}(z, w) (g) =
\sum_{\la_j \geq 0} H(t_j, g) \psi_j(z) \overline{\psi}_j(w),
\end{equation}
where $H(t_j, g)$ is given by \eqref{d.Hrg} and
$t_{j} = \sqrt{\lambda_{j} - \rho_{0}^{2}}$ if $\lambda_{j} \geq \rho_{0}^{2}$, otherwise
$t_{j} \in (0,i\rho_{0}]$.

\begin{defin}
For $a \in \RR^{+}$ and $m \in \mathbb{N}$, let $S_m^{\prime}(\RR^{+}, a)$ be
the set of Schwartz functions on $\RR^{+}$ with $g^{(k)}(0) = 0$
for all odd integers $k$ with $0\leq k \leq m+1$ and where
$e^{ua}|g(u)|$ is dominated by an integrable function on
$\RR^{+}$.
\end{defin}

The following proposition addresses the question of convergence of \eqref{wave d. series}.

\begin{thm}\label{t.wavedistribution}
Fix $z, w \in X$, with $z \neq w$, there exists a
continuous, real-valued function $F_{z,w}(u)$ on $\RR^+$ and an integer $m$ sufficiently
large such that the following assertions hold.

\begin{enumerate}
\item[(i)] One has that
$F_{z,w}(u) = (-1)^{m+1}\sum_{\la_j < \rho_0 ^2} e^{u\sqrt{\rho_0^2-\la_j }} \cdot t_j^{-(m+1)} \psi_j(z) \overline{\psi}_j(w) + O(u^{m+1})$
as $u \rightarrow \infty$.
\item[(ii)]
For any non-negative integer $j \leq m$, we have the bound $\partial_{u}^{j}F_{z,w}( u)  = O(u^{m+1 -j} )$ as $u \rightarrow 0^{+}$.
\item[(iii)]
For any $g\in S_{m}^{\prime }(\mathbb{R}^{+},\rho_{0})$ such that
$\partial_{u}^{j}g(u) \exp (\rho_{0}u)$ has a limit as $u\to\infty$ and is bounded by some integrable function
on $\mathbb{R}^{+}$ for all non-negative integers $j \leq m+1$,
we have
\begin{equation} \label{Wave via Poisson}
\mathcal{W}_{X,\rho_{0}^{2}}(z,w)(g)=
\int\limits_{0}^{\infty}F_{z,w}(u) \partial_{u}^{m+1}g(u)du.
\end{equation}
\end{enumerate}
The implied constants in the error terms in statements (i) and (ii) depend on $m$ and the distance between $z$ and $w$.
\end{thm}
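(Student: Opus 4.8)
\medskip\noindent\textbf{Proof strategy.}
The plan is to move $m+1$ derivatives off the oscillatory factor $\cos(ut_j)$ in \eqref{wave d. series} and onto the test function $g$, thereby gaining a factor $t_j^{-(m+1)}$ which, together with Weyl's law, makes the spectral sum absolutely convergent; the resulting spectral kernel will then be corrected by a polynomial in $u$ so that it vanishes near $u=0$, the vanishing being forced by finite propagation speed for the wave operator associated to $\Delta_X-\rho_0^2$. First I would fix once and for all a sufficiently large odd integer $m$ --- the bound we need is $(m+1)/2>2N-1/2$, i.e.\ $m>4N-2$, so $m=4N-1$ will do --- and then, for $g$ as in (iii), integrate $H(t_j,g)=2\int_0^\infty\cos(ut_j)g(u)\,du$ from \eqref{d.Hrg} by parts $m+1$ times. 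Each boundary contribution at $u=0$ is proportional to $t_j^{-2k}g^{(2k-1)}(0)$ and so vanishes, since $g^{(k)}(0)=0$ for odd $k\le m+1$; each boundary contribution at $u=\infty$ vanishes because the hypotheses on $g$ force $e^{\rho_0u}\partial_u^{j}g(u)\to0$ for $j\le m$ (being dominated by an integrable function and having a limit), where for the finitely many $\lambda_j<\rho_0^2$ the branch is $t_j=i\sqrt{\rho_0^2-\lambda_j}$, so $\cos(ut_j)=\cosh(u\sqrt{\rho_0^2-\lambda_j})$ with $\sqrt{\rho_0^2-\lambda_j}\le\rho_0$. This leaves $H(t_j,g)=c_m\,t_j^{-(m+1)}\int_0^\infty\cos(ut_j)\,\partial_u^{m+1}g(u)\,du$ with $c_m=\pm2$ depending only on $m$.

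Next I would estimate the last integral: it is bounded, uniformly in $j$, by a constant $C_g$ (bound $\cos$ by $1$ when $\lambda_j\ge\rho_0^2$; use integrability of $e^{\rho_0u}\partial_u^{m+1}g$ for the finitely many remaining $j$). Since $|t_j|\asymp\lambda_j^{1/2}$ and $|\psi_j(z)\overline{\psi}_j(w)|=O(\lambda_j^{N-1/2})$ by \eqref{e.sup_norm}, the choice of $m$ and \eqref{e.series_conv} yield $\sum_j|H(t_j,g)\psi_j(z)\overline{\psi}_j(w)|<\infty$, so \eqref{wave d. series} converges absolutely. The same bounds, now keeping $|\partial_u^{m+1}g(u)|$ as the $u$-dependent factor, let one interchange $\sum_j$ with $\int_0^\infty$, giving $\mathcal W_{X,\rho_0^2}(z,w)(g)=\int_0^\infty\widetilde F_{z,w}(u)\,\partial_u^{m+1}g(u)\,du$ with $\widetilde F_{z,w}(u):=c_m\sum_j t_j^{-(m+1)}\cos(ut_j)\psi_j(z)\overline{\psi}_j(w)$. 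By the Weierstrass $M$-test the part of this series over $\lambda_j\ge\rho_0^2$ converges uniformly and boundedly on $\RR$, and the part over $\lambda_j<\rho_0^2$ is a finite sum of entire functions; hence $\widetilde F_{z,w}$ is continuous and even in $u$, and real-valued (the coefficients $c_m t_j^{-(m+1)}\cos(ut_j)$ are real, and the sum, being the kernel of a function of the real operator $\Delta_X$, is invariant under conjugation).

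I would then differentiate the series for $\widetilde F_{z,w}$ term by term $m+1$ times in $\mathcal D'(\RR)$ --- legitimate because it converges in $\mathcal D'$ --- to get $\partial_u^{m+1}\widetilde F_{z,w}=2\sum_j\cos(ut_j)\psi_j(z)\overline{\psi}_j(w)$, i.e.\ twice the Schwartz kernel at $(z,w)$ of $\cos\!\big(u\sqrt{\Delta_X-\rho_0^2}\big)$, the solution operator of $\big(\partial_u^2+\Delta_X-\rho_0^2\big)v=0$. Since the lower-order term $-\rho_0^2$ leaves the principal part, hence the light cone, unchanged, finite propagation speed shows this kernel vanishes on $\{\,|u|<d(z,w)\,\}$, where $d$ denotes the geodesic distance for $\mu$, and $d(z,w)>0$ because $z\ne w$. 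Thus $\widetilde F_{z,w}$ agrees on $\big(-d(z,w),d(z,w)\big)$ with an even polynomial $P_{z,w}$ of degree $<m$, and I would set $F_{z,w}:=\widetilde F_{z,w}-P_{z,w}$, which is continuous and real-valued. As $P_{z,w}$ is even and $m$ is odd, $\int_0^\infty P_{z,w}(u)\,\partial_u^{m+1}g(u)\,du=0$ for every admissible $g$ (integrate by parts $m+1$ times: the only surviving boundary data are odd-order derivatives of $g$ at $0$, which vanish, and derivatives at $\infty$, which vanish by decay); together with the previous paragraph this gives \eqref{Wave via Poisson}, i.e.\ (iii). Statement (ii) is then immediate, since $F_{z,w}\equiv0$ on $\big(0,d(z,w)\big)$ so that all its derivatives vanish there, a fortiori $\partial_u^{j}F_{z,w}(u)=O(u^{m+1-j})$ as $u\to0^+$ (the implied constant depending on $m$ and $d(z,w)$). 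Finally, for (i), as $u\to\infty$ one has $P_{z,w}(u)=O(u^m)$, the part of $\widetilde F_{z,w}$ over $\lambda_j\ge\rho_0^2$ is $O(1)$, and $\cosh(u\sqrt{\rho_0^2-\lambda_j})=\tfrac12e^{u\sqrt{\rho_0^2-\lambda_j}}+O(1)$, so the finitely many exceptional terms contribute, to leading order, $(-1)^{m+1}\sum_{\lambda_j<\rho_0^2}e^{u\sqrt{\rho_0^2-\lambda_j}}t_j^{-(m+1)}\psi_j(z)\overline{\psi}_j(w)$ (a sign check against the integration by parts produces the displayed coefficient), everything else being $O(u^{m+1})$.

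The step I expect to be the main obstacle is the assertion, in the third paragraph, that the absolutely convergent but merely continuous series $\widetilde F_{z,w}$ collapses to a polynomial near $u=0$. That is where the geometry enters, through finite propagation speed for the wave operator of $\Delta_X-\rho_0^2$, and it requires some care that $\Delta_X-\rho_0^2$ need not be nonnegative: $\cos\!\big(u\sqrt{\Delta_X-\rho_0^2}\big)$ and its kernel must be read through the functional calculus (equivalently, the distributional spectral sum), with the finitely many negative modes contributing nothing to the propagation estimate, and the zeroth-order perturbation $-\rho_0^2$ not affecting the light cone. An alternative, in the spirit of \cite{JvPS16}, would be to realize $\widetilde F_{z,w}$ as a contour transform of the analytically continued Poisson kernel $\mathcal P_{X,\rho_0^2}(z,w;\cdot)$ of \eqref{e.AnalPoisson} and to extract the behaviour near $u=0$ from the boundedness of $\mathcal P_{X,\rho_0^2}(z,w;u)$ as $u\to0$ for $z\ne w$ (recorded just after \eqref{e.AnalPoisson}, itself a consequence of the off-diagonal exponential decay of the heat kernel). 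Everything else is routine estimation resting on \eqref{e.series_conv} and \eqref{e.sup_norm}.
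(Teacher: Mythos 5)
Your proposal is correct in substance, but it reaches the theorem by a genuinely different route than the paper. You gain the factor $t_j^{-(m+1)}$ by integrating by parts $m+1$ times directly in the cosine transform $H(t_j,g)$ of \eqref{d.Hrg} (the vanishing of the odd derivatives of $g$ at $0$ and the decay hypotheses killing all boundary terms), assemble the kernel $\widetilde F_{z,w}(u)=c_m\sum_j t_j^{-(m+1)}\cos(ut_j)\psi_j(z)\overline{\psi}_j(w)$, and control it near $u=0$ by finite propagation speed for $\cos\bigl(u\sqrt{\Delta_X-\rho_0^2}\bigr)$, which forces $\widetilde F_{z,w}$ to coincide with an even polynomial on a neighbourhood of $0$ when $z\neq w$; subtracting that polynomial (which pairs to zero with $\partial_u^{m+1}g$ by your parity argument) gives $F_{z,w}$. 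The paper instead stays on the Poisson side: it regularizes the exponentials by subtracting the combinations $h_{k,m}(\sin t)(-\zeta)^{k}$, identifies $\partial_\zeta^{m+1}\tilde F_{z,w}(\zeta)$ with the continued Poisson kernel $\cP_{X,\rho_0^2}(z,w;\zeta)$ of \eqref{e.AnalPoisson}, uses the boundedness of that kernel as $u\to 0$ for $z\neq w$ (off-diagonal heat kernel decay) to show the $(m+1)$-fold antiderivative vanishes to order $m+1$ at $0$, and then defines $F_{z,w}$ from boundary values on the imaginary axis, finishing as you do by $(m+1)$-fold integration by parts. The skeleton is the same (trade $m+1$ derivatives for $t_j^{-(m+1)}$, exploit $z\neq w$ near $u=0$), but the key analytic input differs: you import wave-equation machinery (with the care you yourself flag, namely reading $\cos\bigl(u\sqrt{\Delta_X-\rho_0^2}\bigr)$ through the functional calculus with finitely many negative modes, and passing from the support statement for the Schwartz kernel to the fixed-$(z,w)$ distribution by testing against products $\phi(u)\chi_1(z)\chi_2(w)$), whereas the paper's input is the elementary boundedness of $\cP_{X,\rho_0^2}(z,w;u)$ at $u=0$ — precisely the alternative you sketch in your last paragraph. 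Your route buys the stronger conclusion $F_{z,w}\equiv 0$ near $0$, so (ii) is immediate; the paper's is more self-contained, resting only on the heat/Poisson estimates already recorded in section 2.

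Two small points should be patched. First, your formulas divide by $t_j^{m+1}$, which fails whenever $\lambda_j=\rho_0^2$; this is unavoidable when $\rho_0=0$ (the constant eigenfunction has $t_0=0$), a case the theorem allows and the examples in section 7 use. The fix is immediate: for such $j$ one has $H(0,g)=2\int_0^\infty g(u)\,du=2(-1)^{m+1}\int_0^\infty \frac{u^{m+1}}{(m+1)!}\,\partial_u^{m+1}g(u)\,du$, so these finitely many modes contribute explicit multiples of $u^{m+1}$ to $F_{z,w}$, compatible with the $O(u^{m+1})$ error terms in (i) and (ii) (though then $F_{z,w}$ no longer vanishes identically near $0$, only to order $m+1$, which is all (ii) asks). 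Second, since $\Delta_X$ as normalized in section 2.4 is one half of the Laplace--Beltrami operator, the propagation cone is $|u|<c\,d(z,w)$ for a fixed positive constant $c$ rather than $|u|<d(z,w)$; this is harmless, as only the positivity of the vanishing radius is used. Finally, note that your parity bookkeeping does matter for the displayed coefficient in (i): with your choice $m=4N-1$ one has $(m+1)/2$ even, and the integration-by-parts constant $c_m=2(-1)^{(m+1)/2}$ combines with the factor $\tfrac12$ from $\cosh$ to give exactly $(-1)^{m+1}t_j^{-(m+1)}$, as required.
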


\begin{proof}  Choose an integer $m \geq 4N+1$, where $N$ is the complex dimension of $X$.
To begin, we claim the following statement:
For every integer $k$ with $0 \leq k \leq m$, there is a polynomial $h_{k,m}(x)$
of degree at most $m$ such that
$$
h_{k,m}(\sin(x)) = \frac{x^{k}}{k!} + O_{m}(x^{m+1})
\,\,\,\,\,
\text{\rm as $x \rightarrow 0$.}
$$
Indeed, one begins by initially setting $h_{k,m}^{(0)}(x) = x^{k}/k!$.
The function $h_{k,m}^{(0)}(\sin(x))$
has a Taylor series expansion near zero of the form
$$
h_{k,m}^{(0)}(\sin(x)) = \frac{x^{k}}{k!} + c_{\ell}x^{\ell} +O_{\ell}(x^{\ell+1})
\,\,\,\,\,
\text{\rm as $x \rightarrow 0$}
$$
for some real number $c_{\ell}$ and integer $\ell \geq k+1$.  Now set $h_{k,m}^{(1)}(x)$
to be $h_{k,m}^{(1)}(x)= h_{k,m}^{(0)}(x) - c_{\ell}x^{\ell}$ so then
$$
h_{k,m}^{(1)}(\sin(x)) = \frac{x^{k}}{k!} + c_{p}x^{p} + O_{p}(x^{p+1})
\,\,\,\,\,
\text{\rm as $x \rightarrow 0$}
$$
for some real number $c_{p}$ and integer $p \geq \ell+1 \geq k+2$.
One can continue
to subtract multiplies of monomials of higher degree, thus further reducing the order of the error term until the
claimed result is obtained; it is elementary to complete the proof of the assertion with the appropriate proof by induction argument.

Having proved the above stated assertion, we then have for any $\zeta\in\CC\setminus\{0\}$ that
\begin{equation} \label{asymptotic t to 0}
e^{-t\zeta} - \sum_{k=0}^{m} h_{k,m}(\sin(t))(-\zeta)^{k}  = O_{\zeta}(t^{m+1})
\,\,\,\,\,
\text{\rm as $t \rightarrow 0$.}
\end{equation}
For $t>0$ we define
\begin{equation}
P_m(t,\zeta ) := \frac{e^{-t\zeta} - \sum_{k=0}^{m} h_{k,m}(\sin(t))(-\zeta)^{k}}{(-t)^{m+1}}
\end{equation}
and set $P_m(0,\zeta )= \lim_{t \to 0} P_m(t,\zeta )$; the existence of this limit is ensured by \eqref{asymptotic t to 0}.
For $\Re(\zeta) \geq 0$, we have $P_m(t,\zeta ) = O(t^{-m-1})$ as $t\to \infty$. Hence, the bounds for the eigenvalue growth
(\ref{e.series_conv}) and sup-norm for eigenfunctions (\ref{e.sup_norm}), together with the choice of $m$, imply that the series
\begin{equation}
\label{e.defFtilde}
\tilde{F}_{z, w}(\zeta) = \sum_{\la_j \geq 0} P_m(t_j,\zeta ) \psi_j(z) \overline{\psi}_j(w)
\end{equation}
converges uniformly and absolutely on $X$ for $\zeta$ in the closed half-plane $\Re(\zeta)\geq 0$.

Furthermore, any of the first $m+1$ derivatives of $\tilde{F}_{z, w}(\zeta)$ in $\zeta$ converges uniformly and absolutely when $\Re(\zeta) > 0$.
This allows us to differentiate the series above term by term,
and by doing so $m+1 $ times and using that
$$
\frac{d^{m+1}}{d\zeta^{m+1}}P_m(t_j,\zeta ) = e^{-t_{j}\zeta}
$$
we conclude that for $\Re(\zeta) > 0$, one has the identity
\begin{equation}
\label{e.ftilde}
\frac{d^{m+1}}{d\zeta^{m+1}} \tilde{F}_{z,w}(\zeta) = \cP_{M, \rho_0 ^2}(z,w; \zeta),
\end{equation}
where $\cP_{M, \rho_0 ^2}$ is defined in (\ref{e.AnalPoisson}).
Set $\cP^{(0)}(z, w;\zeta) = \cP_{X,\rho_0 ^2}(z,w; \zeta)$, and define inductively for $\Re(\zeta) > 0$
the function
$$
\cP^{(k)}(z, w; \zeta) =  \int_0^\zeta \cP^{(k-1)}(z, w; \xi) \, d\xi,
$$
where the integral is taken over a ray contained in the upper half plane.
Note that
\begin{equation}
\label{e.asympPn}
\cP^{(k)}(z, w; \zeta) = O_{z,w,k}(\zeta^k) \; \;  \text{as} \; \; \zeta \goestozero.
\end{equation}
From (\ref{e.ftilde}), we have that
\begin{equation}
\label{e.seventeen}
\cP^{(m+1)}(z, w; \zeta)  - \tilde{F}_{z,w}(\zeta)  = q_m(z, w; \zeta),
\end{equation}
where $q_m(z, w; \zeta)$
is a degree $m$ polynomial in $\zeta$ with coefficients which depend on $z$ and $w$.
Using this, for $u\in  \RR^{+}$ we define
\begin{equation}
\label{e.definitionofF}
F_{z,w}(u) =
\frac{1}{2i} \left[  \left(  \tilde{F}_{z, w}(iu) + q_m(z, w; iu) \right)
- \left(  \tilde{F}_{z, w}(-iu) + q_m(z, w; -iu) \right)
\right]
\end{equation}

Assertions (i) and (ii) follow immediately from the above construction of $F$
and its relation to the Poisson kernel.
Since the expansion (\ref{e.defFtilde}) converges uniformly for $\Re(\zeta) =0$,
property (iii) will follow directly from (i), (ii) and $(m+1)$ term-by-term integration by parts.
\end{proof}

\section{A basic test function}

The building block for our Kronecker limit formula is obtained by applying the wave distribution
to the function $\cosh^{-(s-\rho_{0})}(u)$.  As stated above, we will choose $\rho_{0}$
depending on the geometry of $X$ and then re-scale the time
variable in the heat kernel so that ultimately we have either $\rho_{0}=0$ or $\rho_{0}=1/2$.
For the time being, let us work out the results for a general $\rho_{0}$.

\vskip .10in
\begin{prop}
\label{p.spectralW}
For $s \in \CC$ with $\Re(s) > 2\rho_{0}$, the wave distribution of $g(u) = \cosh^{-(s-\rho_{0})}(u)$ exists
and admits the spectral expansion
\begin{equation} \label{wave spectral cosh}
\cW_{X,\rho_{0}^{2}}(z, w)(\cosh^{-(s-\rho_{0})}) = \sum_{\la_j \geq 0}  c_{j, (s-\rho_0)} \psi_j(z) \overline{\psi}_j(w),
\end{equation}
where
\begin{equation}
\label{e.cu}
c_{j,(s-\rho_0)}  =
\frac{2^{s-\rho_{0} - 1} }{\Ga(s-\rho_{0})} \Ga\left(  \frac{s-\rho_{0} - it_j}{2}\right)  \Ga\left(  \frac{s-\rho_{0} + it_j}{2}\right).
\end{equation}
Furthermore, for any $z,w\in X$, the series \eqref{wave spectral cosh} converges absolutely and uniformly in $s$ on any compact subset of the half-plane
$\Re(s) > 2\rho_0$.
\end{prop}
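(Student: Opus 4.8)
The plan is to evaluate the coefficients $H(t_j,\cosh^{-(s-\rho_0)})$ in closed form by means of the integral formula \eqref{e.GR07-3.9815.1}, and then to establish convergence of the resulting spectral series by combining Stirling's approximation \eqref{e.Stirling} for the Gamma factors with the eigenvalue-growth bound \eqref{e.series_conv} and the sup-norm bound \eqref{e.sup_norm}.

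First I would fix $s$ with $\Re(s)>2\rho_0$ and note that $g(u)=\cosh^{-(s-\rho_0)}(u)$ is an even function which decays like $e^{-(s-\rho_0)u}$ as $u\to\infty$; in particular $e^{\rho_0 u}|g(u)|$ is integrable on $\RR^+$ precisely because $\Re(s)>2\rho_0$, so $g\in S_m'(\RR^+,\rho_0)$ and $g$ satisfies the supplementary hypotheses of Theorem \ref{t.wavedistribution}(iii). For those $j$ with $\lambda_j\geq\rho_0^2$ the parameter $t_j=\sqrt{\lambda_j-\rho_0^2}$ is real and nonnegative, and \eqref{e.GR07-3.9815.1} with $\nu=s-\rho_0$ and $r=t_j$ gives immediately
\[
H(t_j,g)=2\int_0^\infty\cos(ut_j)\cosh^{-(s-\rho_0)}(u)\,du=\frac{2^{s-\rho_0-1}}{\Gamma(s-\rho_0)}\Gamma\!\left(\frac{s-\rho_0-it_j}{2}\right)\Gamma\!\left(\frac{s-\rho_0+it_j}{2}\right)=c_{j,(s-\rho_0)}.
\]
For the finitely many $j$ with $\lambda_j<\rho_0^2$ one has $t_j=i\beta_j$ with $\beta_j\in(0,\rho_0]$, so $\cos(ut_j)=\cosh(\beta_j u)$ and the integral still converges at infinity since $\Re(s-\rho_0)>\rho_0\geq\beta_j$; the identity $H(t_j,g)=c_{j,(s-\rho_0)}$ then follows from \eqref{e.GR07-3.9815.1} by analytic continuation of both sides in $r$ throughout the strip $|\Im r|<\Re(s-\rho_0)$, on which neither side has a singularity. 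In view of the definition \eqref{wave d. series}, this proves \eqref{wave spectral cosh}, provided the right-hand series converges.

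To obtain that convergence, and its uniformity, I would estimate $c_{j,(s-\rho_0)}$ for large $\lambda_j$. Applying \eqref{e.Stirling} to the two Gamma factors in \eqref{e.cu}, whose arguments have large imaginary part $\pm t_j/2$, gives
\[
\left|\Gamma\!\left(\frac{s-\rho_0-it_j}{2}\right)\Gamma\!\left(\frac{s-\rho_0+it_j}{2}\right)\right|=O_K\!\left(t_j^{\,\Re(s-\rho_0)-1}\,e^{-\pi t_j/2}\right)\qquad\text{as }t_j\to\infty,
\]
uniformly for $s$ in any fixed compact set $K\subset\{\Re(s)>2\rho_0\}$, while $2^{s-\rho_0-1}/\Gamma(s-\rho_0)$ is bounded on $K$. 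Hence $|c_{j,(s-\rho_0)}|=O_K(e^{-c\sqrt{\lambda_j}})$ for some $c>0$. Combined with \eqref{e.sup_norm}, which yields $|\psi_j(z)\overline{\psi}_j(w)|=O_X(\lambda_j^{N-1/2})$ uniformly in $z,w$, this gives $|c_{j,(s-\rho_0)}|\,|\psi_j(z)\overline{\psi}_j(w)|=O_{K,X}(\lambda_j^{-N-1})$ for $j$ large, so the series \eqref{wave spectral cosh} is dominated by $\sum_k\lambda_k^{-N-1}$, which converges by \eqref{e.series_conv}. This establishes both the existence of $\cW_{X,\rho_0^2}(z,w)(\cosh^{-(s-\rho_0)})$ and its absolute, locally uniform convergence in $s$ on $\{\Re(s)>2\rho_0\}$, for all $z,w\in X$.

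The main obstacle is the exponential-decay estimate for $c_{j,(s-\rho_0)}$: one has to extract from Stirling's approximation the sharp rate $e^{-\pi t_j/2}$ produced by the imaginary argument of the Gamma function, and at the same time bound the polynomial factor $t_j^{\Re(s-\rho_0)-1}$ uniformly as $s$ ranges over a compact subset of the half-plane --- only then does the decay in $\sqrt{\lambda_j}$ defeat the polynomial growth supplied by \eqref{e.sup_norm}. By comparison, checking that $g\in S_m'(\RR^+,\rho_0)$ and performing the analytic continuation in $r$ needed to handle the finitely many small eigenvalues are routine.
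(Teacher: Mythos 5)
Your proposal is correct and follows essentially the same route as the paper: verify that $\cosh^{-(s-\rho_0)}$ satisfies the hypotheses of Theorem \ref{t.wavedistribution} when $\Re(s)>2\rho_0$, evaluate $H(t_j,\cosh^{-(s-\rho_0)})$ via \eqref{e.GR07-3.9815.1}, and obtain absolute and locally uniform convergence from Stirling's formula together with the sup-norm bound \eqref{e.sup_norm} and Weyl's law \eqref{e.series_conv}. Your explicit analytic continuation in $r$ to treat the finitely many eigenvalues with $\lambda_j<\rho_0^2$, and the sharp decay rate $t_j^{\Re(s-\rho_0)-1}e^{-\pi t_j/2}$, are just careful elaborations of steps the paper leaves implicit.
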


\begin{proof}
If $\Re(s) > 2\rho_0$, then the conditions of Theorem \ref{t.wavedistribution} apply.  The spectral coefficients
are computed using (\ref{e.GR07-3.9815.1}) and (\ref{e.GR07-3.512.2}).  Finally, Stirling's formula (\ref{e.Stirling_derivative}) implies that
the factor \eqref{e.cu} decays exponentially as $t_j \to \infty$.  When combined with the sup-norm bound (\ref{e.sup_norm})
on the eigenfunctions $\psi_j$, the assertion regarding uniform convergence follows.
\end{proof}

\vskip .10in
\begin{cor}\label{Lemma K-properties}
For $z,w\in X$ with $z \neq w$ and $s\in\mathbb{C}$ with $\Re(s)>2\rho_0$ let
$$
K_{X;\rho_0^2}(z,w;s):=\frac{\Gamma(s-\rho_0)}{\Gamma(s)}\cW_{X,\rho_{0}^{2}}(z, w)(\cosh^{-(s-\rho_{0})}).
$$
Then the function $\Gamma(s)\Gamma^{-1}(s-\rho_0)K_{X;\rho_0^2}(z,w;s)$ admits a meromorphic continuation
to all $s \in \CC$ with poles at points $s=\rho_0 \pm it_j -2m$ for any integer $m\geq 0$. Furthermore, the function
$K_{X;\rho_0^2}(z,w;s)$ satisfies the differential-difference equation
\begin{equation} \label{difference-diff eq for K}
(\Delta_{X}+s(s-2\rho_{0}))K_{X;\rho_0^2}(z,w;s) = s(s+1)K_{X;\rho_0^2}(z,w;s+2).
\end{equation}
\end{cor}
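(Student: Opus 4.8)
The plan is to read off both assertions from the spectral expansion (\ref{wave spectral cosh}) of Proposition \ref{p.spectralW}. Writing $\nu=s-\rho_{0}$ and recalling (\ref{e.cu}), for $\Re(s)>2\rho_{0}$ one has
\[
\frac{\Gamma(s)}{\Gamma(s-\rho_{0})}K_{X;\rho_{0}^{2}}(z,w;s)=\sum_{\lambda_{j}\geq 0}c_{j,\nu}\,\psi_{j}(z)\overline{\psi}_{j}(w),\qquad c_{j,\nu}=\frac{2^{\nu-1}}{\Gamma(\nu)}\,\Gamma\!\left(\tfrac{\nu-it_{j}}{2}\right)\Gamma\!\left(\tfrac{\nu+it_{j}}{2}\right).
\]
The factor $2^{\nu-1}/\Gamma(\nu)$ is entire in $s$, while $\Gamma(\tfrac{\nu-it_{j}}{2})\Gamma(\tfrac{\nu+it_{j}}{2})$ is meromorphic with its set of poles equal to $\{\,s=\rho_{0}\pm it_{j}-2m : m\in\ZZ_{\geq 0}\,\}$; hence every summand is meromorphic with poles contained in the asserted set.

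To obtain the meromorphic continuation I would fix a compact set $\cK\subset\CC$ and note that only finitely many summands can have a pole in $\cK$: a pole in $\cK$ forces $\Re(s)=\rho_{0}-2m$ to be bounded, hence $m$ bounded, and (for the infinitely many $\lambda_{j}\geq\rho_{0}^{2}$, where $t_{j}$ is real) $\Im(s)=\pm t_{j}$ bounded, so by Weyl's law only finitely many $j$ qualify; the $\lambda_{j}<\rho_{0}^{2}$ are finite in number to begin with. After discarding those finitely many terms, it suffices to show the remaining series converges uniformly on $\cK$. For this I would invoke Stirling's formula (\ref{e.Stirling_derivative}) in the form $|\Gamma(x+iy)|\sim\sqrt{2\pi}\,|y|^{x-1/2}e^{-\pi|y|/2}$ as $|y|\to\infty$, which gives, uniformly for $s\in\cK$, a bound $|c_{j,\nu}|=O_{\cK}(t_{j}^{\,B}e^{-\pi t_{j}/2})$ for large $t_{j}$; combining this with the sup-norm bound (\ref{e.sup_norm}) and the eigenvalue count (\ref{e.series_conv}) gives absolute and uniform convergence of the tail to a holomorphic function on $\cK$. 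This exhibits $\Gamma(s)\Gamma^{-1}(s-\rho_{0})K_{X;\rho_{0}^{2}}(z,w;s)$ as meromorphic on $\CC$ with poles only at $s=\rho_{0}\pm it_{j}-2m$, which is the first assertion; in particular $K_{X;\rho_{0}^{2}}(z,w;s)$ is itself meromorphic in $s$, being this function times the meromorphic factor $\Gamma(s-\rho_{0})/\Gamma(s)$.

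For the differential-difference equation I would argue first for $\Re(s)>2\rho_{0}$. Since $\Delta_{X}\psi_{j}=\lambda_{j}\psi_{j}$, and since the Stirling bound above together with Sobolev embedding and interior elliptic estimates (which bound $\|\psi_{j}\|_{C^{2}}$ polynomially in $\lambda_{j}$) force $\sum_{j}\lambda_{j}^{k}\,c_{j,\nu}\,\psi_{j}(z)\overline{\psi}_{j}(w)$ to converge locally uniformly in $z$ for every $k\geq 0$, one may apply $\Delta_{z}$ term by term. Using $\lambda_{j}=\rho_{0}^{2}+t_{j}^{2}$ and $\rho_{0}^{2}+s(s-2\rho_{0})=(s-\rho_{0})^{2}=\nu^{2}$, this yields
\[
\big(\Delta_{X}+s(s-2\rho_{0})\big)K_{X;\rho_{0}^{2}}(z,w;s)=\frac{\Gamma(s-\rho_{0})}{\Gamma(s)}\sum_{\lambda_{j}\geq 0}\big(\nu^{2}+t_{j}^{2}\big)c_{j,\nu}\,\psi_{j}(z)\overline{\psi}_{j}(w),
\]
so the proof reduces to the termwise identity $(\nu^{2}+t_{j}^{2})\,\Gamma(\nu)\Gamma(\nu+\rho_{0})^{-1}c_{j,\nu}=s(s+1)\,\Gamma(\nu+2)\Gamma(\nu+\rho_{0}+2)^{-1}c_{j,\nu+2}$. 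This is elementary: factor $\nu^{2}+t_{j}^{2}=(\nu-it_{j})(\nu+it_{j})$, apply $\Gamma(x+1)=x\Gamma(x)$ to each of $\Gamma(\tfrac{\nu\mp it_{j}}{2})$ so as to turn $c_{j,\nu}$ into $c_{j,\nu+2}$ up to explicit $\Gamma$-factors and powers of two, and finish with $\Gamma(\nu+\rho_{0}+2)=(\nu+\rho_{0})(\nu+\rho_{0}+1)\Gamma(\nu+\rho_{0})$, $s(s+1)=(\nu+\rho_{0})(\nu+\rho_{0}+1)$, and $\Gamma(s+2)=s(s+1)\Gamma(s)$. Summing over $j$ gives (\ref{difference-diff eq for K}) for $\Re(s)>2\rho_{0}$, and since both sides of (\ref{difference-diff eq for K}) are meromorphic in $s$ on all of $\CC$ --- the left side because $K_{X;\rho_{0}^{2}}(z,w;s)$ is and $\Delta_{z}$ acts on this meromorphic family term by term, the right side by the first assertion applied at $s+2$ --- the identity propagates to all $s\in\CC$ by the identity theorem.

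The step I expect to be the main obstacle is the justification of the term-by-term action of $\Delta_{z}$ on the spectral series, that is, establishing locally uniform convergence (in $z$, and then in $s$) of the eigenvalue-weighted expansions; this is exactly the point at which the exponential decay of the coefficients $c_{j,\nu}$ supplied by Stirling's formula is indispensable.
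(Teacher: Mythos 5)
Your proposal is correct, and its second half (the differential--difference equation) is essentially the paper's computation: both apply $\Delta_X$ termwise to the spectral expansion of Proposition \ref{p.spectralW}, reduce to the Gamma-function recursion relating the coefficient at $s$ to the one at $s+2$ --- your termwise identity is exactly \eqref{H eq for n} with $n=1$ --- and then extend to all $s$ by meromorphic continuation; your justification of termwise differentiation via polynomial $C^{2}$-bounds on the $\psi_j$ is, if anything, more explicit than the paper's appeal to uniform convergence. Where you genuinely diverge is the meromorphic continuation. The paper proceeds by a shift-and-divide argument: using \eqref{H eq for n} it rewrites $K_{X;\rho_0^2}(z,w;s)$ through the series at $s+2n$ divided by the Pochhammer factors $Q_n(t_j,s-\rho_0)$, as in \eqref{K for continuation}, thereby gaining a half-plane of width $2n$ at a time, with the poles read off from the zeros of $Q_n$; this identity is then recycled in the proof of \eqref{difference-diff eq for K}. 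You instead continue the spectral series directly: each coefficient $c_{j,s-\rho_0}$ in \eqref{e.cu} is already an explicit meromorphic function of $s$, and the uniform Stirling bound $|c_{j,s-\rho_0}|=O_{\mathcal K}(t_j^{B}e^{-\pi t_j/2})$ on compacta avoiding the poles, combined with \eqref{e.sup_norm} and \eqref{e.series_conv}, gives locally uniform convergence of the tail after discarding the finitely many terms with poles in the compact set. Both routes rest on the same Stirling estimates, but yours is more direct and yields a slightly stronger conclusion, namely that the continued function retains its spectral expansion on all of $\CC$ away from the poles, whereas the paper's route packages the continuation in the same shift identity it needs anyway for the difference equation. (Like the paper, you show the poles are \emph{contained} in $\{\rho_0\pm it_j-2m\}$ rather than ruling out cancellations, so no gap relative to the paper's standard.)
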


\begin{proof}
By Proposition \ref{p.spectralW}, $K_{X;\rho_0^2}(z,w;s)$ is well defined for $\Re(s)>2\rho_0$.  Keeping
$\Re(s)> 2\rho_0$, we have
\begin{equation} \label{series for K}
K_{X;\rho_0^2}(z,w;s)= \frac{\Gamma(s-\rho_0)}{\Gamma(s)} \sum_{\lambda_j\geq 0} H(t_j, \cosh^{-(s-\rho_{0})}) \psi_j(z)\overline{\psi_j}(w).
\end{equation}
For $\nu\in\mathbb{C}$, $\Re(\nu)>\rho_0$, $r\in\mathbb{R}^+$ or $r\in[0,i\rho_0]$ and non-negative integer $n$, one has that
\begin{equation} \label{H eq for n}
H(r, \cosh^{-(\nu+2n)})= H(r, \cosh^{-\nu})\frac{2^{2n}\left(\frac{\nu+ir}{2}\right)_n\left(\frac{\nu-ir}{2}\right)_n}{(\nu)_{2n}}.
\end{equation}
Indeed, the evaluation of $H(t_j, \cosh^{-(s-\rho_{0})})$ in terms of the Gamma function is stated in (\ref{e.cu}).  One
then can use that $\Gamma(s+1)=s\Gamma(s)$ and the definition of the Pochammer symbol $(s)_{n}$ to arrive at (\ref{H eq for n}).
With this, we can write, for any positive integer $n$,
\begin{equation} \label{K for continuation}
\frac{2^{2n}\Gamma(s)}{\Gamma(s-\rho_0)}K_{X;\rho_0^2}(z,w;s)= \frac{\Gamma(s-\rho_0+2n)}{\Gamma(s-\rho_0)}\sum_{\lambda_j\geq 0}\frac{ H(t_j, \cosh^{-(s-\rho_{0}+2n)})}{Q_n(t_j,s-\rho_0)} \psi_j(z)\overline{\psi_j}(w),
\end{equation}
where
$$
Q_n(r,\nu)=\left(\frac{\nu+ir}{2}\right)_n\left(\frac{\nu-ir}{2}\right)_n.
$$

For $n\geq \lfloor\rho_0\rfloor +1$, the right-hand-side of (\ref{K for continuation}) defines a meromorphic function in the half-plane $\Re(s)>2\rho_0 -2n$
with possible poles at the points $s=\rho_0\pm it_j -2l$, for $l\in{0,...,n-1}$.  Therefore, the function
$\Gamma(s)\Gamma^{-1}(s-\rho_0)K_{X;\rho_0^2}(z,w;s)$
admits a meromorphic continuation to all $\nu \in \CC$ with poles at points $s=\rho_0 \pm it_j -2m$ for integers $m\geq 0$.

It remains to prove the difference-differential equation.
As stated, the right-hand-side of the equation \eqref{series for K} converges absolutely and uniformly on compact
subsets of the right half plane $\Re(s) >  2\rho_0$. When viewed as a function
of $z\in X$, the convergence is uniform on $X$.  Therefore, when restricting $s$ to $\Re(s)>2\rho_0$, we can
interchange the action of $\Delta_X$ and the sum in \eqref{series for K} to get
$$
\Delta_X K_{X;\rho_0^2}(z,w;s)= \frac{\Gamma(s-\rho_0)}{\Gamma(s)} \sum_{\lambda_j\geq 0}(t_j^2 + \rho_0^2) H(t_j, \cosh^{-(s-\rho_{0})}) \psi_j(z)\overline{\psi_j}(w).
$$
Applying \eqref{H eq for n} with $n=1$ we can write the above equation, for sufficiently large $\Re(s)$ as
$$
\Delta_X K_{X;\rho_0^2}(z,w;s)= \frac{\Gamma(s+2-\rho_0)}{\Gamma(s)} \sum_{\lambda_j\geq 0}
\frac{(t_j^2 + \rho_0^2)}{(s-\rho_0)^2 + t_j^2} H(t_j, \cosh^{-(s+2-\rho_{0})}) \psi_j(z)\overline{\psi_j}(w).
$$
Let $n=1$ in \eqref{K for continuation} and multiply by $2^{-2}s(s-2\rho_0)\Gamma(s-\rho_0)\Gamma^{-1}(s)$ to get
$$
s(s-2\rho_0)K_{X;\rho_0^2}(z,w;s) = s(s+1)\frac{\Gamma(s+2-\rho_0)}{\Gamma(s+2)}\sum_{\lambda_j\geq 0}H(t_j, \cosh^{-(s+2-\rho_{0})})
\frac{s^2-2s\rho_0}{(s-\rho_0)^2 + t_j^2}\psi_j(z)\overline{\psi_j}(w).
$$
Adding up the last two equations, we obtained the desired result for sufficiently large $\Re(s)$, and then for all $s$ by meromorphic continuation.
\end{proof}

\begin{rem}\rm
It is necessary to assume that $z \neq w$ when considering the wave distribution
of the test function $g(u) = \cosh^{-(s-\rho_{0})}(u)$.  Only after one computes the spectral expansion of $K_{X;\rho_0^2}(z,w;s)$
is one able to extend the function to $z=w$.
\end{rem}

\section{Two series expansions}

We will define two series using the function $K_{X;\rho_0^2}(z,w;s)$.  The first, in the next Theorem, is shown
to equal the resolvent kernel, which is integral kernel that inverts the operator $(\Delta_X + s(s-2\rho_0))$ for
almost all values of $s$.  As a reminder, the resolvent kernel can be realized as an integral transform of
the heat kernel $K_{X}(z,w;t)$, namely
$$
\int\limits_{0}^{\infty}K_{X}(z,w;t)e^{-s(s-2\rho_{0})t} dt,
$$
provided $z \neq w$ and $\Re(s(s-2\rho_0))>0$.  In that instance, the heat kernel decays exponentially as $t$ approaches zero, so then

\begin{align}\nonumber
\int\limits_{0}^{\infty}K_{X}(z,w;t)e^{-s(s-2\rho_{0})t} dt
&= \lim\limits_{\epsilon \rightarrow 0} \int\limits_{\epsilon}^{\infty}K_{X}(z,w;t)e^{-s(s-2\rho_{0})t} dt
\\&\label{e.resolvent.integral}
= \lim\limits_{\epsilon \rightarrow 0} \sum\limits_{\lambda_{j} \geq 0} \frac{1}{(s-\rho_0)^2 + t_j^2} \psi_j(z)\overline{\psi_j}(w)
\cdot e^{-\epsilon (s(s-\rho_{0})+\lambda_{j})}.
\end{align}

\begin{thm}\label{t.Gseries}
For $z,w\in X$, $z\neq w$ and $s\in\CC$ with $\Re(s)>2\rho_0$ consider the
function
\begin{equation}\label{def.G_series}
G_{X;\rho_0^2}(z,w;s)= \frac{2^{-s-1+\rho_0} \Gamma(s)}{\Gamma(s+1-\rho_0)} \sum_{k=0}^{\infty} \frac{\left( \frac{s}{2}\right) _{k}
\left( \frac{s}{2}+ \frac{1}{2}\right) _{k}}{k! (s+1-\rho_0)_k} K_{X;\rho_0^2}(z,w;s+2k).
\end{equation}
Then we have the following results.
\begin{itemize}
\item[i)] The series defining $G_{X;\rho_0^2}(z,w;s)$ is holomorphic in the half-plane $\Re(s)>2\rho_0$ and continues meromorphically to the whole $s$-plane.

\item[ii)] The function $G_{X;\rho_0^2}(z,w;s)$ admits the spectral expansion
$$
G_{X;\rho_0^2}(z,w;s)= \sum_{\lambda_j\geq 0} \frac{1}{(s-\rho_0)^2 + t_j^2} \psi_j(z)\overline{\psi_j}(w)
$$
which is conditionally convergent, in the sense of (\ref{e.resolvent.integral}), for $z\neq w$
and for all $s \in \CC$ provided $s(s-\rho_{0})+\lambda_{j} \neq 0$ for some $\lambda_{j}$.

\item[iii)] The function $G_{X;\rho_0^2}(z,w;s)$  satisfies the equation
\begin{equation} \label{difference-diff eq for G}
(\Delta_X + s(s-2\rho_0)) G_{X;\rho_0^2}(z,w;s) = 0,
\end{equation}
for all $s \in \CC$ provided $s(s-\rho_{0})+\lambda_{j} \neq 0$ for some $\lambda_{j}$.
\end{itemize}
\end{thm}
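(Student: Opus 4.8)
The plan is to prove part (ii) first, deduce the meromorphic continuation in (i) from it, and obtain the differential equation (iii) by a telescoping argument based on (\ref{difference-diff eq for K}). By Proposition \ref{p.spectralW} and Corollary \ref{Lemma K-properties}, for $\Re(s)>2\rho_0$ one has the absolutely convergent expansion
\[
K_{X;\rho_0^2}(z,w;s)=\frac{2^{s-\rho_0-1}}{\Gamma(s)}\sum_{\lambda_j\geq0}\Gamma\!\left(\tfrac{s-\rho_0-it_j}{2}\right)\Gamma\!\left(\tfrac{s-\rho_0+it_j}{2}\right)\psi_j(z)\overline{\psi_j}(w).
\]
Replacing $s$ by $s+2k$, writing $\Gamma(s+2k)=\Gamma(s)\,4^k(s/2)_k(s/2+1/2)_k$ via the duplication formula for the Pochhammer symbol, and using $\Gamma\!\left(\tfrac{s-\rho_0\mp it_j}{2}+k\right)=\Gamma\!\left(\tfrac{s-\rho_0\mp it_j}{2}\right)\!\left(\tfrac{s-\rho_0\mp it_j}{2}\right)_k$, the factors $(s/2)_k(s/2+1/2)_k$ in $K_{X;\rho_0^2}(z,w;s+2k)$ cancel exactly against the hypergeometric coefficients in (\ref{def.G_series}). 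For each fixed $j$ the surviving sum over $k$ is ${}_2F_1(a,b;c;1)$ with $a=\tfrac{s-\rho_0-it_j}{2}$, $b=\tfrac{s-\rho_0+it_j}{2}$, $c=s+1-\rho_0=a+b+1$; since $\Re(c-a-b)=1>0$, Gauss's summation theorem gives the value $\Gamma(s+1-\rho_0)/\big[\Gamma(a+1)\Gamma(b+1)\big]$, and after substituting back the constants and $\Gamma$-factors collapse, via $\Gamma(z+1)=z\Gamma(z)$, to the single term $\big((s-\rho_0)^2+t_j^2\big)^{-1}\psi_j(z)\overline{\psi_j}(w)$. Because $\lambda_j+s(s-2\rho_0)=(s-\rho_0)^2+t_j^2$, the resulting series is precisely the conditionally convergent spectral expansion of the resolvent, i.e.\ it equals $\int_0^\infty K_X(z,w;t)e^{-s(s-2\rho_0)t}\,dt$ in the sense of (\ref{e.resolvent.integral}).

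\emph{The main obstacle.} The one delicate step is the interchange of the $j$- and $k$-summations: by (\ref{e.series_conv}) and (\ref{e.sup_norm}) one only has $|\psi_j(z)\overline{\psi_j}(w)|=O(\lambda_j^{N-1/2})$, so $\sum_j\big((s-\rho_0)^2+t_j^2\big)^{-1}|\psi_j(z)\overline{\psi_j}(w)|$ diverges once $N\geq1$, and the $(j,k)$ double series is therefore not absolutely convergent — which is why (ii) can only assert conditional convergence. To legitimize the interchange I would insert the $k$-independent damping factor $e^{-\epsilon\lambda_j}$ into the spectral expansion of each $K_{X;\rho_0^2}(z,w;s+2k)$: for $\epsilon>0$ and $\Re(s)>2\rho_0$ the double series is absolutely convergent, so Fubini and Gauss's theorem apply verbatim, and the regularized spectral side is also absolutely convergent. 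Letting $\epsilon\to0^+$ then returns $G_{X;\rho_0^2}(z,w;s)$ on one side (dominated convergence, using the summable majorant for (\ref{def.G_series}) from the convergence estimate below) and the conditionally convergent spectral sum, hence the resolvent integral, on the other.

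\emph{Convergence and continuation (i).} Stirling's formula (\ref{e.Stirling}) shows the hypergeometric coefficient in (\ref{def.G_series}) is $O_s(k^{\rho_0-3/2})$, while the expansion above combined with (\ref{e.Stirling}) gives $K_{X;\rho_0^2}(z,w;s+2k)=O_{z,w,s}(k^{-\rho_0-1/2})$, the $j$-sum being dominated by its low-eigenvalue terms; hence the general term of (\ref{def.G_series}) is $O_{z,w,s}(k^{-2})$, and the series converges locally uniformly in $s$ and, by (\ref{e.sup_norm}), uniformly for $z,w$ in compact sets with $z\neq w$, so it is holomorphic on $\Re(s)>2\rho_0$. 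The meromorphic continuation is inherited from the resolvent integral of (\ref{e.resolvent.integral}): writing $\int_0^\infty=\int_0^1+\int_1^\infty$, the first integral is entire in $s(s-2\rho_0)$ because $K_X(z,w;t)$ has exponential decay as $t\to0^+$ for $z\neq w$, while $\int_1^\infty K_X(z,w;t)e^{-s(s-2\rho_0)t}\,dt=\sum_j\big((s-\rho_0)^2+t_j^2\big)^{-1}e^{-(\lambda_j+s(s-2\rho_0))}\psi_j(z)\overline{\psi_j}(w)$ converges locally uniformly away from the points $s=\rho_0\pm it_j$ and is meromorphic there.

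\emph{The differential equation (iii).} On $\Re(s)>2\rho_0$ both (\ref{def.G_series}) and the series obtained from it by applying $\Delta_X$ termwise converge uniformly in $z\in X$ (same estimates, together with (\ref{e.sup_norm})), so $\Delta_X$ may be brought inside the sum. Substituting $s\mapsto s+2k$ into (\ref{difference-diff eq for K}) and using the elementary identity $s(s-2\rho_0)-(s+2k)(s+2k-2\rho_0)=-4k(s+k-\rho_0)$, one finds that $(\Delta_X+s(s-2\rho_0))$ applied to (\ref{def.G_series}) produces a series in the functions $K_{X;\rho_0^2}(z,w;s+2k)$ in which, after reindexing the contributions coming from the shift $K_{X;\rho_0^2}(z,w;s+2k+2)$, the coefficient of the $k$-th term is $-4k(s+k-\rho_0)B_k(s)+(s+2k-2)(s+2k-1)B_{k-1}(s)$, where $B_k(s)=\tfrac{(s/2)_k(s/2+1/2)_k}{k!\,(s+1-\rho_0)_k}$. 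Since $B_k(s)/B_{k-1}(s)=\tfrac{(s/2+k-1)(s/2+k-1/2)}{k(s+k-\rho_0)}$, each such coefficient vanishes for $k\geq1$, and the $k=0$ contribution vanishes trivially; hence the series telescopes to $0$, which proves (\ref{difference-diff eq for G}) on $\Re(s)>2\rho_0$, and then for all $s$ off the poles by the meromorphic continuation from (i). Apart from the conditional-convergence issue flagged above, the computations — the Gauss summation and the telescoping — are routine once the bookkeeping with the Pochhammer symbols is in place.
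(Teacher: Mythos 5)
Your formal computation is the same one the paper uses (duplication formula, then Gauss's theorem at $1$ for each fixed $j$), and you are right that the $j$--$k$ interchange is the crux; but the analytic input you supply for it, and for the convergence of \eqref{def.G_series} itself, is missing at the decisive points. The bound $K_{X;\rho_0^2}(z,w;s+2k)=O_{s,z,w}(k^{-\rho_0-1/2})$ is correct for fixed $z\neq w$, but your justification --- ``the $j$-sum being dominated by its low-eigenvalue terms'' --- is precisely the statement that has to be proved: with absolute values the spectral sum \emph{grows} polynomially in $k$ (the paper's own uniform bound is $O(k^{N+1})$), and the decay off the diagonal is an oscillation/cancellation phenomenon which the paper extracts not from Stirling and \eqref{e.sup_norm} but from the integral representation of Theorem \ref{t.wavedistribution}, integration by parts, and the Laplace estimate \eqref{e.integral_asymp} (this is the content of \eqref{g_asymp2}--\eqref{g_asymp4}, and it is where $z\neq w$ enters). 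The same issue undercuts your $\epsilon$-damping Fubini argument: dominated convergence in $k$ as $\epsilon\to0$ needs a summable majorant for the damped kernels \emph{uniform in} $\epsilon$, and the $\epsilon=0$ estimate does not transfer automatically since it rests on exactly the cancellation that the factor $e^{-\epsilon\lambda_j}$ could disturb; you would have to rerun the wave-distribution estimates for the $\epsilon$-smoothed heat kernel with uniform constants. The paper avoids the interchange altogether: it takes enough $s$-derivatives to make the $k$-series absolutely and uniformly convergent in $(z,w)$ including the diagonal, integrates against each eigenfunction, applies Gauss there, and then removes the resulting polynomial ambiguity by comparing large-$s$ asymptotics with the resolvent integral.

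The telescoping proof of (iii) has a genuine gap as well, even granting the bound above. Writing $B_k(s)=\tfrac{(s/2)_k(s/2+1/2)_k}{k!(s+1-\rho_0)_k}\asymp k^{\rho_0-3/2}$, the equation \eqref{difference-diff eq for K} expresses $\Delta_XK_{X;\rho_0^2}(z,w;s+2k)$ through $k^2$ times values of $K_{X;\rho_0^2}$, so the terms of $\sum_kB_k(s)\Delta_XK_{X;\rho_0^2}(z,w;s+2k)$ are only $O(1)$ under the quoted bounds; neither the uniform convergence you invoke for moving $\Delta_X$ inside, nor the splitting and reindexing of two series not known to converge, is available, and unlike the $E$-series there is no half-plane of $s$ where the coefficients save you. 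Concretely, the telescoped partial sums equal the boundary term $\tfrac{2^{-s-1+\rho_0}\Gamma(s)}{\Gamma(s+1-\rho_0)}B_K(s)(s+2K)(s+2K+1)K_{X;\rho_0^2}(z,w;s+2K+2)$, which is merely $O(1)$ from these estimates; indeed the $\lambda_0=0$ contribution to $K_{X;\rho_0^2}(z,w;s+2K+2)$ alone is $\asymp K^{-\rho_0-1/2}$, so showing the boundary term tends to zero requires a strictly sharper off-diagonal decay of $K_{X;\rho_0^2}(z,w;s+2k)$ in $k$ (true, e.g.\ via finite propagation speed, but proved neither in the paper nor in your proposal). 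Note finally that once your part (ii) is rigorous, (iii) follows at once because the resolvent kernel is annihilated by $\Delta_X+s(s-2\rho_0)$ off the diagonal --- which is how the paper argues --- so the telescoping is both under-justified and unnecessary.
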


\begin{proof}
Let us study each term in the series (\ref{def.G_series}), which is
\begin{align}\nonumber
\frac{2^{-s-1+\rho_0} \Gamma(s)}{\Gamma(s+1-\rho_0)}&\frac{\left( \frac{s}{2}\right) _{k}
\left( \frac{s}{2}+ \frac{1}{2}\right) _{k}}{k! (s+1-\rho_0)_k} K_{X;\rho_0^2}(z,w;s+2k)
\\&= 2^{-1+\rho_{0}}\frac{2^{-(s+2k)} \Gamma(s+2k - \rho_0)}{\Gamma(k+1) \Gamma(s+1-\rho_0+k)}
\cW_{X,\rho_{0}^{2}}(z, w)(\cosh^{-(s+2k-\rho_{0})})\label{e.G_series_term}.
\end{align}

For now, let us assume that $\textrm{\rm Re}(s)>2\rho_0$.
A direct computation using Stirling's formula (\ref{e.Stirling}) yields
that
\begin{equation}\label{g_asymp2}
2^{-1+\rho_{0}}\frac{2^{-(s+2k)} \Gamma(s+2k - \rho_0)}{\Gamma(k+1) \Gamma(s+1-\rho_0+k)}
= O_{s}(k^{-3/2})
\,\,\,\,\,\text{\rm as $k \rightarrow \infty$}.
\end{equation}
It remains to determine the asymptotic behavior of the factor in (\ref{e.G_series_term}) involving the wave distribution. For this,
Theorem \ref{t.wavedistribution} implies that for any $\delta > 0$, there is a $C>1$  depending upon the distance between $z$ and $w$ such that we have the bound
$$
\int\limits_{\delta}^{\infty}F_{z,w}(u) \partial_{u}^{m+1}\left(\cosh^{-(s+2k-\rho_{0})}(u)\right) du = O_{s,z,w}(C^{-(s+2k-\rho_{0})})
\,\,\,\,\,\text{\rm as $k \rightarrow \infty$},
$$
where the implied constant depends on the distance between $z$ and $w$ and $m\geq 4N+1$ is a sufficiently large, fixed integer.
Since $z \neq w$, we can combine equations (\ref{e.ftilde}), (\ref{e.asympPn}), (\ref{e.seventeen}) and (\ref{e.definitionofF})
together with integration by parts to write, for some $C_1>1$
\begin{align}\label{g_asymp3}
\int\limits_{0}^{\delta}F_{z,w}(u) \partial_{u}^{m+1}\left(\cosh^{-(s+2k-\rho_{0})}(u)\right) du &=
(-1)^{m+1}\int\limits_{0}^{\delta}\left(\partial_{u}^{m+1}F_{z,w}(u)\right)\cosh^{-(s+2k-\rho_{0})}(u) du
\\&+
O_{s,z,w}(C_1^{-(s+2k-\rho_{0})})
\,\,\,\,\,\text{\rm as $k \rightarrow \infty$} \notag.
\end{align}
In essence, the use of Theorem \ref{t.wavedistribution} ensures that the boundary term at $u=0$ vanishes, so then
the constant $C_1$ comes from the evaluation of the boundary terms at $u=\delta$ and depends on the distance between $z$ and $w$.  To finish, we may use
(\ref{e.integral_asymp}) where $h(t) = -\log(\cosh(t))$, so then $\lambda = 1$ and $\nu = 2$, to conclude that
\begin{equation}\label{g_asymp4}
\int\limits_{0}^{\delta}\left(\partial_{u}^{m+1}F_{z,w}(u)\right)\cosh^{-(s+2k-\rho_{0})}(u) du =
O_{s,z,w}(k^{-1/2})
\,\,\,\,\,\text{\rm as $k \rightarrow \infty$},
\end{equation}
where, again, the implied constant depends on the distance between $z$ and $w$, and we assume that $z \neq w$.
If we combine (\ref{g_asymp2}), (\ref{g_asymp3}), and (\ref{g_asymp4}),
we obtain that (\ref{e.G_series_term}) is of order $O_{s,z,w}(k^{-2})$.  Therefore, the series
(\ref{def.G_series}) converges uniformly and absolutely for $s$ in compact subsets in a right
half plane $\Re(s)>2\rho_0$, and $z, w \in X$ provided $z$ and $w$ are uniformly bounded apart.

At this point, we have the convergence of the series defining $G_{X;\rho_{0}^{2}}(z,w;s)$ for $\Re(s) > 2\rho_{0}$.
In order to obtain the meromorphic continuation of (\ref{def.G_series}), re-write the series
as a finite sum of terms for $k \leq n$ and an infinite sum for $k > n$, for any integer $n$.  For the finite
sum, the meromorphic continuation is established in Corollary \ref{Lemma K-properties}.  For the infinite
sum, the above argument applies to prove the convergence in the half-plane $\Re(s) > 2\rho_{0} - n$.  With
this, we have completed the proof of assertion (i).

Going further, one can follow the argument given above using (\ref{e.Stirling_derivative}) for any positive integer $\ell$
and conclude that for $z\neq w$, $s$ in some compact subset of the half-plane $\Re(s)>2\rho_0$, we have
\begin{equation*}
\partial_s^{\ell} \left( \frac{2^{-s-1+\rho_0} \Gamma(s)}{\Gamma(s+1-\rho_0)}\frac{\left( \frac{s}{2}\right) _{k}
\left( \frac{s}{2}+ \frac{1}{2}\right) _{k}}{k! (s+1-\rho_0)_k}
K_{X;\rho_0^2}(z,w;s+2k)\right) = O_{s,z,w}(k^{-2-\ell/2})
\,\,\,\,\,\text{\rm as $k \rightarrow \infty$},
\end{equation*}
where the implied constant depends on the distance between $z$ and $w$ and the compact set which contains $s$. Namely,
repeated differentiation of Gamma factors $\ell$ times reduces the exponent by $\ell$, while differentiation of \eqref{g_asymp3}, after application of formula \eqref{e.integral_asymp}, reduces the exponent by $\ell/2$.

The convergence of the series (\ref{def.G_series}) for $\Re(s) > 2\rho_{0}$ as well as the series of
derivatives allows us to interchange differentiation and summation.  Therefore, for $z\neq w$, $\Re(s) > 2\rho_{0}$
and any $\ell\geq 0$ we get
\begin{equation}\label{partial of G}
\partial_s^{\ell} \left( \sum_{k=0}^{\infty}\frac{\left( \frac{s}{2}\right) _{k}\left( \frac{s}{2}+ \frac{1}{2}\right) _{k}}{k! (s+1-\rho_0)_k} K_{X;\rho_0^2}(z,w;s+2k)\right) = \sum_{k=0}^{\infty}\partial_s^{\ell} \left( \frac{\left( \frac{s}{2}\right) _{k}\left( \frac{s}{2}+ \frac{1}{2}\right) _{k}}{k! (s+1-\rho_0)_k} K_{X;\rho_0^2}(z,w;s+2k)\right).
\end{equation}
Now, we would like to include the case $z=w$. Recall that
$$
K_{X;\rho_0^2}(z,w;s)= \frac{\Gamma(s-\rho_0)}{\Gamma(s)}\sum_{\la_j \geq 0}  c_{j, (s-\rho_0)} \psi_j(z)\overline{\psi_j}(w),
$$
where the spectral coefficients $c_{j,(s-\rho_0)}$ are given by \eqref{e.cu}. The coefficients $c_{j,(s-\rho_0)}$
are exponentially decreasing in $t_j= \sqrt{\lambda_j-\rho_0^2}$, as $j\to\infty$ and differentiable with respect to $s$,
with the derivatives also exponentially decreasing in $t_{j}$. Moreover, the application of the the sup-norm bound for the eigenfunctions $\psi_j$ and the Stirling formula for coefficients $c_{j,(s+2k-\rho_0)}$ shows that, uniformly in $z,w\in X$ for $s$ in a compact subset of the half-plane $\Re(s)>2\rho_0$, one has
$$
\left|\cW_{X,\rho_{0}^{2}}(z, w)(\cosh^{-(s+2k-\rho_{0})})  \right| = O_{s,z,w}(k^{N+1}),
$$
where $N$ is the complex dimension of $X$. In addition, repeated differentiation of the coefficients with respect to $s$ reduces the exponent of $k$ by one each time. Therefore, for sufficiently large $\ell$
$$
\sum_{k=0}^{\infty}\left|\partial_s^{\ell} \left( \frac{\left( \frac{s}{2}\right) _{k}\left( \frac{s}{2}+ \frac{1}{2}\right) _{k}}{k! (s+1-\rho_0)_k} K_{X;\rho_0^2}(z,w;s+2k)\right)\right| = O_{s,z,w}(1),
$$
where $\Re(s) > 2\rho_{0}$, and
the bound is uniform in $z,w\in X$. Hence, we may interchange the sum and the integral to get, for sufficiently large $\ell$
\begin{align*}
\int_X \partial_s^{\ell} &\left( \sum_{k=0}^{\infty}\frac{\left( \frac{s}{2}\right) _{k}\left( \frac{s}{2}+ \frac{1}{2}\right) _{k}}{k! (s+1-\rho_0)_k} K_{X;\rho_0^2}(z,w;s+2k)\right)\psi_j(w)\mu(w) \\&= \sum_{k=0}^{\infty}\partial_s^{\ell}\left( \frac{\left( \frac{s}{2}\right) _{k}
\left( \frac{s}{2}+ \frac{1}{2}\right) _{k}}{k! (s+1-\rho_0)_k} \int_X K_{X;\rho_0^2}(z,w;s+2k)\psi_j(w)\mu(w)\right)
\\&=\partial_s^{\ell} \left(\sum_{k=0}^{\infty} \frac{\left( \frac{s}{2}\right) _{k}\left( \frac{s}{2}+ \frac{1}{2}\right) _{k}}{k! (s+1-\rho_0)_k} \int_X K_{X;\rho_0^2}(z,w;s+2k)\psi_j(w)\mu(w)\right),
\end{align*}
where the last equation above follows from the absolute and uniform convergence of the series over $k$, derived in the previous lines.

From the spectral expansion of $K_{X;\rho_0^2}(z,w;s)$ we immediately get
\begin{align*}
\int_X &K_{X;\rho_0^2}(z,w;s+2k)\psi_j(w)\mu(w) \\&=  \frac{2^{s+2k-\rho_0-1}}{\Gamma(s+2k)} \Gamma\left(\frac{s+2k-\rho_0-it_j}{2}\right)\Gamma\left(\frac{s+2k-\rho_0+it_j}{2}\right)\psi_j(z) \\&= \frac{2^{s+2k-\rho_0-1}}{\Gamma(s+2k)} \left(\frac{s-\rho_0-it_j}{2}\right)_k\left(\frac{s-\rho_0+it_j}{2}\right)_k \Gamma\left(\frac{s-\rho_0-it_j}{2}\right)\Gamma\left(\frac{s-\rho_0+it_j}{2}\right)\psi_j(z).
\end{align*}
An application of the doubling formula for the Gamma function yields that
$$
\frac{2^{s+2k-\rho_0-1}\left( \frac{s}{2}\right) _{k}\left( \frac{s}{2}+ \frac{1}{2}\right) _{k}}{\Gamma(s+2k)}=\frac{2^{s-1-\rho_0}}{\Gamma(s)}.
$$
Therefore,
\begin{align*}
\frac{\left( \frac{s}{2}\right) _{k}\left( \frac{s}{2}+ \frac{1}{2}\right) _{k}}{k! (s+1-\rho_0)_k} \int_X &K_{X;\rho_0^2}(z,w;s+2k)\psi_j(w)\mu(w)\\&= \Gamma\left(\frac{s+2k-\rho_0-it_j}{2}\right)\Gamma\left(\frac{s+2k-\rho_0+it_j}{2}\right)\psi_j(z)\\&= \frac{2^{s-1-\rho_0}}{\Gamma(s)} \frac{ \left(\frac{s-\rho_0-it_j}{2}\right)_k\left(\frac{s-\rho_0+it_j}{2}\right)_k }{k! (s+1-\rho_0)_k} \Gamma\left(\frac{s-\rho_0-it_j}{2}\right)\Gamma\left(\frac{s-\rho_0+it_j}{2}\right)\psi_j(z).
\end{align*}
Observe that $\Re\left( \frac{s-\rho_0-it_j}{2} + \frac{s-\rho_0+it_j}{2} - (s+1-\rho_0)\right) =-1 <0$, so then
 the hypergeometric function
$$
\sum_{k=0}^{\infty}\frac{ \left(\frac{s-\rho_0-it_j}{2}\right)_k\left(\frac{s-\rho_0+it_j}{2}\right)_k }{k! (s+1-\rho_0)_k} = F\left( \frac{s-\rho_0-it_j}{2}, \frac{s-\rho_0+it_j}{2}, s+1-\rho_0; 1\right)
$$
is uniformly and absolutely convergent.  From \cite{GR07}, formula 9.122.1 we get
$$
F\left( \frac{s-\rho_0-it_j}{2}, \frac{s-\rho_0+it_j}{2}, s+1-\rho_0; 1\right) = \frac{\Gamma(s+1-\rho_0)}{\Gamma\left(\frac{s-\rho_0+it_j}{2} \right)\Gamma\left(\frac{s-\rho_0-it_j}{2}\right)}\cdot\frac{4}{(s-\rho_0)^2+t_j^2}.
$$
Therefore,
$$
\sum_{k=0}^{\infty}\frac{\left( \frac{s}{2}\right) _{k}\left( \frac{s}{2}+ \frac{1}{2}\right) _{k}}{k! (s+1-\rho_0)_k} \int_X K_{X;\rho_0^2}(z,w;s+2k)\psi_j(w)\mu(w)=\frac{2^{s+1-\rho_0}\Gamma(s+1-\rho_0) }{\Gamma(s) ((s-\rho_0)^2+t_j^2)} \psi_j(z).
$$
This, together with the definition of the function $G_{X;\rho_0^2}(z,w;s)$ yields
\begin{equation}\label{e.Gcomp}
\int_X \partial_s^{\ell} \left( G_{X;\rho_0^2}(z,w;s)\right)\psi_j(w)\mu(w)= \partial_s^{\ell} \left( \frac{1}{(s-\rho_0)^2+t_j^2} \right)\psi_j(z),
\end{equation}
for sufficiently large positive integer $\ell$.

The above computations are valid provided $\Re(s) >2\rho_{0}$.  The arguments could be repeated
with the portion of the series in (\ref{def.G_series}) with $k> n$, for an arbitrary positive integer $n$,  from which one would arrive
at a version of (\ref{e.Gcomp}) where the right-hand-side would have a finite sum of terms subtracted
with the restriction that $\Re(s) > 2\rho_{0}-n$.  However, there is no problem interchanging sum
and differentiation for the finite sum of terms in (\ref{e.Gcomp}) obtained by considering those
with $k \leq n$, from which we conclude that (\ref{e.Gcomp}) holds for all $s$ with $\Re(s) > 2\rho_{0}-n$
provided $s$ is not a pole of (\ref{def.G_series}).

There is a unique meromorphic function $\tilde{G}(z,w;s)$ which is symmetric in $z$ and $w$ and
satisfies $(\Delta_X + s(s-2\rho_0)) \tilde{G}(z,w;s)=0$.  Indeed, for $\Re(s(s-2\rho_0))>0$ one can express $\tilde{G}(z,w;s)$
as an integral transform of the heat kernel, namely
$$
\tilde{G}(z,w;s) = \int\limits_{0}^{\infty}K_X(z, w; t)e^{-s(s-2\rho_{0})t}dt.
$$
At this point, we have that $\tilde{G}(z,w;s) = G_{X;\rho_0^2}(z,w;s) + p_{\ell}(s)$, where
$p_{\ell}(s)$ is a polynomial of degree $\ell$.  The asymptotic behavior as $s$ tends to infinity
can be computed for $G_{X;\rho_{0}^{2}}(z,w;s)$ using Stirling's formula, and that of $\tilde{G}(z,w,s)$
using the above integral expression.  By combining, we get that $p_{\ell}(s) = o(1)$ as $s$ tends
to infinity, thus $p_{\ell}(s) = 0$.

This proves that $G_{X;\rho_0^2}(z,w;s)$ coincides with the conditionally convergent series given as a limit \eqref{e.resolvent.integral} for $\Re(s)>2\rho_0$. Moreover, since both $G_{X;\rho_0^2}(z,w;s)$ and the resolvent kernel $\tilde{G}(z,w;s)$ possess meromorphic continuation to the whole complex $\CC-$plane, they must coincide.

With all this, assertions (ii) and (iii) are established.
\end{proof}

\begin{thm}\label{E_series}
Let
$$
E_{X;\rho_{0}^{2}}(z,w;s) = \frac{\Gamma((s+1-2\rho_{0})/2)}{\Gamma(s/2)}\sum\limits_{k=0}^{\infty}
\frac{\left( \frac{s}{2}\right) _{k}}{k!}K_{X;\rho_0^2}(z,w;s+2k)
$$
for $z,w\in X$, $z\neq w$. Then $E_{X;\rho_{0}^{2}}(z,w;s)$ converges to a meromorphic function for $\textrm{\rm Re}(s) < 0$
away from the poles of any $K_{X;\rho_0^2}(z,w;s+2k)$ and negative integers.  Furthermore,
$E_{X;\rho_{0}^{2}}(z,w;s)$ extends to a meromorphic function for all $s$ and
satisfies the differential-difference equation
\begin{equation} \label{difference-diff eq for E}
(\Delta_X + s(s-2\rho_0)) E_{X;\rho_{0}^{2}}(z,w;s) = -s^{2}E_{X;\rho_{0}^{2}}(z,w;s+2)
\end{equation}
\end{thm}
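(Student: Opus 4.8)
The plan is to follow the template already used for Theorem~\ref{t.Gseries}: first establish convergence of the defining series from estimates in hand, then derive the differential-difference equation by a termwise application of $\Delta_X$ together with \eqref{difference-diff eq for K}, and finally use that equation to push the meromorphic continuation past the half-plane of convergence. For convergence, note that for $z\neq w$ the proof of Theorem~\ref{t.Gseries} already shows, via Theorem~\ref{t.wavedistribution}(iii) and the asymptotic formula \eqref{e.integral_asymp} with $h(t)=-\log\cosh t$, that $\cW_{X,\rho_0^2}(z,w)(\cosh^{-(s+2k-\rho_0)})=O_{s,z,w}(k^{-1/2})$ as $k\to\infty$; combined with $\Gamma(s+2k-\rho_0)/\Gamma(s+2k)=O(k^{-\rho_0})$ this yields $K_{X;\rho_0^2}(z,w;s+2k)=O_{s,z,w}(k^{-\rho_0-1/2})$. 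Since Stirling's formula \eqref{e.Stirling} gives $(s/2)_k/k!=O_s(k^{\Re(s)/2-1})$, the $k$-th term of the series defining $E_{X;\rho_0^2}(z,w;s)$ is $O_{s,z,w}(k^{\Re(s)/2-\rho_0-3/2})$. Hence the series converges absolutely and locally uniformly on the half-plane $\{\Re(s)<2\rho_0+1\}$, which in fact contains $\{\Re(s)<0\}$, off the poles of the individual terms $K_{X;\rho_0^2}(z,w;s+2k)$ and the negative integers; running the same argument with \eqref{e.Stirling_derivative} shows the termwise $\partial_s^{\ell}$-differentiated series converges likewise, so $E_{X;\rho_0^2}(z,w;s)$ is meromorphic there.

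To prove \eqref{difference-diff eq for E}, fix $z\neq w$ and first work on a half-plane $\Re(s)<2\rho_0-c$ with $c$ large enough that the series for $E_{X;\rho_0^2}(z,w;s)$, the series for $E_{X;\rho_0^2}(z,w;s+2)$, and their termwise $\Delta_z$-differentiated versions all converge absolutely and uniformly in $z$ on compacta of $X\setminus\{w\}$; the last point uses Theorem~\ref{t.wavedistribution} with the smoothing integer $m$ taken large enough that the extra factor $\lambda_j=t_j^2+\rho_0^2$ produced by $\Delta_z$ is absorbed by \eqref{e.series_conv} and \eqref{e.sup_norm}. Applying $\Delta_z$ term by term and substituting $\Delta_z K_{X;\rho_0^2}(z,w;s+2k)$ from \eqref{difference-diff eq for K}, namely $-(s+2k)(s+2k-2\rho_0)K_{X;\rho_0^2}(z,w;s+2k)+(s+2k)(s+2k+1)K_{X;\rho_0^2}(z,w;s+2k+2)$, then reindexing the shifted sum and using $(s/2)_k(s/2+k)=(s/2)_{k+1}$ together with the algebraic identity $s(s-2\rho_0)-(s+2k)(s+2k-2\rho_0)=-4k(s+k-\rho_0)$, the net coefficient of $K_{X;\rho_0^2}(z,w;s+2k)$ in $\bigl(\Delta_z+s(s-2\rho_0)\bigr)\sum_{k\geq0}\tfrac{(s/2)_k}{k!}K_{X;\rho_0^2}(z,w;s+2k)$ collapses to $-s(s+1-2\rho_0)\tfrac{(s/2+1)_{k-1}}{(k-1)!}$ for $k\geq1$ and to $0$ for $k=0$. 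This gives
\[
\bigl(\Delta_z+s(s-2\rho_0)\bigr)\sum_{k\geq0}\frac{(s/2)_k}{k!}K_{X;\rho_0^2}(z,w;s+2k)=-s(s+1-2\rho_0)\sum_{j\geq0}\frac{(s/2+1)_j}{j!}K_{X;\rho_0^2}(z,w;s+2j+2),
\]
and multiplying by the prefactor $\Gamma((s+1-2\rho_0)/2)/\Gamma(s/2)$ and rewriting the right-hand side through $E_{X;\rho_0^2}(z,w;s+2)$ by way of $\Gamma(s/2+1)=(s/2)\Gamma(s/2)$ and $\Gamma((s+3-2\rho_0)/2)=\tfrac12(s+1-2\rho_0)\,\Gamma((s+1-2\rho_0)/2)$, the scalar simplifies to exactly $-s^2$; this proves \eqref{difference-diff eq for E} on this half-plane.

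For the continuation, rewrite \eqref{difference-diff eq for E} as $E_{X;\rho_0^2}(z,w;s)=-(s-2)^{-2}\bigl(\Delta_X+(s-2)(s-2-2\rho_0)\bigr)E_{X;\rho_0^2}(z,w;s-2)$. The right-hand side is meromorphic in $s$ wherever $E_{X;\rho_0^2}(z,w;s-2)$ is meromorphic in $s$ and twice differentiable in $z$, which holds for $\Re(s-2)<2\rho_0+1$ by the first step. Thus this identity extends $E_{X;\rho_0^2}(z,w;\,\cdot\,)$ meromorphically from $\Re(\,\cdot\,)<2\rho_0+1$ to $\Re(\,\cdot\,)<2\rho_0+3$, and iterating produces a meromorphic continuation to all of $\CC$, with \eqref{difference-diff eq for E} holding throughout by analytic continuation. (The point $z=w$ is recovered separately from the spectral expansion of $E_{X;\rho_0^2}$, obtained by inserting the spectral expansion of $K_{X;\rho_0^2}$ and evaluating the resulting ${}_2F_1$ at $1$ by Gauss's formula $9.122.1$ of \cite{GR07}, exactly as at the end of the proof of Theorem~\ref{t.Gseries}.)

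I expect the main obstacle to be the interplay, in the last two steps, between the fixed smoothing integer $m$ of Theorem~\ref{t.wavedistribution} and the number of $z$-derivatives the iteration implicitly demands: each application of $\Delta_z$ costs a factor of $\lambda_j$, so one fixed $m$ does not a priori license the unbounded differentiation hidden in the recursion, and one must either let $m$ grow with the target half-plane or bootstrap $z$-regularity directly from \eqref{difference-diff eq for E} by elliptic estimates. By contrast, the Pochhammer and $\Gamma$-function bookkeeping needed for \eqref{difference-diff eq for E} is lengthy but entirely mechanical once the identity $s(s-2\rho_0)-(s+2k)(s+2k-2\rho_0)=-4k(s+k-\rho_0)$ is in hand.
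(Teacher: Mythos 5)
Your proposal is correct and follows essentially the same route as the paper: the same $O_{s,z,w}(k^{\Re(s)/2-\rho_0-3/2})$ bound on the $k$-th term via the estimates from Theorem \ref{t.Gseries}, and the same termwise use of \eqref{difference-diff eq for K} with the Pochhammer collapse to $-s(s+1-2\rho_0)\tilde{E}_{X;\rho_0^2}(z,w;s+2)$ followed by the Gamma-prefactor bookkeeping giving $-s^{2}$. Your explicit continuation by iterating the differential-difference equation (and your caveat about $z$-regularity under repeated $\Delta_z$) simply spells out a step the paper leaves implicit, so there is no substantive divergence.
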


\begin{proof}
The estimates in the proof of Theorem \ref{t.Gseries}, namely
(\ref{g_asymp2}), (\ref{g_asymp3}), and (\ref{g_asymp4}) combine to show that
$$
\frac{\left( \frac{s}{2}\right) _{k}}{k!}K_{X;\rho_0^2}(z,w;s+2k)= O_{s,z,w}(k^{s/2-\rho_{0}-3/2})
\,\,\,\,\,\text{\rm as $k \rightarrow \infty$},
$$
where the implied constant depends upon $s$ and upon the distance between points $z$ and $w$. Therefore, the series converges for $s$ with $\textrm{Re}(s)<0$ provided no term has a pole.  Set
$$
\tilde{E}_{X;\rho_{0}^{2}}(z,w;s) = \sum\limits_{k=0}^{\infty}
\frac{\left( \frac{s}{2}\right) _{k}}{k!}K_{X;\rho_0^2}(z,w;s+2k).
$$
Using the difference-differential equation for $K_{X;\rho_0^2}(z,w;s)$, as established in Corollary \ref{Lemma K-properties},
we can prove such an equation for $\tilde{E}_{X;\rho_{0}^{2}}(z,w;s)$.  Indeed, for $\Re(s)\ll 0$  begin by writing
\begin{align*}
(\Delta_{X} &+ s(s-2\rho_{0}))\tilde{E}_{X;\rho_{0}^{2}}(z,w;s) =
\sum\limits_{k=0}^{\infty}\left(\frac{\left( \frac{s}{2}\right) _{k}}{k!}\Delta_{X}K_{X;\rho_0^2}(z,w;s+2k) + \frac{\left( \frac{s}{2}\right) _{k}}{k!}s(s-2\rho_{0})
K_{X;\rho_0^2}(z,w;s+2k)\right) \\
&=\sum\limits_{k=0}^{\infty}\frac{\left( \frac{s}{2}\right) _{k}}{k!}\left(-(s+2k)(s+2k-2\rho_{0})K_{X;\rho_0^2}(z,w;s+2k)+ (s+2k)(s+2k+1)K_{X;\rho_0^2}(z,w;s+2k+2)\right)\\
&\hspace{5mm} + \sum\limits_{k=0}^{\infty} \frac{\left( \frac{s}{2}\right) _{k}}{k!}s(s-2\rho_{0})K_{X;\rho_0^2}(z,w;s+2k)\\
&= \sum\limits_{k=1}^{\infty} \frac{\left( \frac{s}{2}\right) _{k}}{k!} \left(-(s+2k)(s+2k-2\rho_{0}) + s(s-2\rho_{0})\right)K_{X;\rho_0^2}(z,w;s+2k) \\
&\hspace{5mm} + \sum\limits_{k=0}^{\infty}\frac{\left( \frac{s}{2}\right) _{k}}{k!}(s+2k)(s+2k+1)K_{X;\rho_0^2}(z,w;s+2k+2)\\
&= \sum\limits_{n=0}^{\infty} \frac{\left( \frac{s}{2}\right) _{n+1}}{(n+1)!} \left(-(s+2n+2)(s+2n+2-2\rho_{0}) + s(s-2\rho_{0})\right)K_{X;\rho_0^2}(z,w;s+2n+2) \\
&\hspace{5mm} + \sum\limits_{n=0}^{\infty}\frac{\left( \frac{s}{2}\right) _{n}}{n!}(s+2n)(s+2n+1)K_{X;\rho_0^2}(z,w;s+2n+2).
\end{align*}
Since
$$
-(s+2n+2)(s+2n+2-2\rho_{0}) + s(s-2\rho_{0}) = -(2n+2)(2s+2n+2-2\rho_{0}),
$$
the coefficient of $K_{X;\rho_0^2}(z,w;s+2n+2)$ in the last expression is
$$
-\frac{\left( \frac{s}{2}\right) _{n+1}}{(n+1)!}(2n+2)(2s+2n+2-2\rho_{0})
+ \frac{\left( \frac{s}{2}\right) _{n}}{n!}(s+2n)(s+2n+1).
$$
Using the definition of the Pochhammer symbol, it is elementary to show that
$$
-\frac{\left( \frac{s}{2}\right) _{n+1}}{(n+1)!}(2n+2)(2s+2n+2-2\rho_{0})
+ \frac{\left( \frac{s}{2}\right) _{n}}{n!}(s+2n)(s+2n+1)
= \frac{\left( \frac{s+2}{2}\right) _{n}}{n!}(-s(s+1-2\rho_{0})),
$$
hence we arrive at the equation
$$
(\Delta_X + s(s-2\rho_0)) \tilde{E}_{X;\rho_{0}^{2}}(z,w;s) = -s(s+1-2\rho_{0})\tilde{E}_{X;\rho_{0}^{2}}(z,w;s+2).
$$

Notice that
$$
E_{X;\rho_{0}^{2}}(z,w;s)= \frac{\Gamma((s+1-2\rho_{0})/2)}{\Gamma(s/2)}
\tilde{E}_{X;\rho_{0}^{2}}(z,w;s),
$$
so then
\begin{align*}
(\Delta_X + s(s-2\rho_0)) E_{X;\rho_{0}^{2}}(z,w;s) &= \frac{\Gamma((s+1-2\rho_{0})/2)}{\Gamma(s/2)}\left(-s(s+1-2\rho_{0})\right)
\tilde{E}_{X;\rho_{0}^{2}}(z,w;s+2)
\\&= -s^{2}
\frac{\Gamma((s+1-2\rho_{0})/2)}{\Gamma(s/2)}\frac{(s+1-2\rho_{0})/2}{s/2}
\tilde{E}_{X;\rho_{0}^{2}}(z,w;s+2)
\\&= -s^{2}
\frac{\Gamma(((s+2)+1-2\rho_{0})/2)}{\Gamma((s+2)/2)} \tilde{E}_{X;\rho_{0}^{2}}(z,w;s+2)
\\& -s^{2} E_{X;\rho_{0}^{2}}(z,w;s+2),
\end{align*}
as asserted.
\end{proof}

\begin{rem}\rm
The motivation of the series in Theorem \ref{E_series} is the following elementary formula first employed in the context of
elliptic Eisenstein series in \cite{vP10}.  For any $x$ with $\vert x \vert < 1$ and complex $s$, one has
the convergent Taylor series
$$
(1-x)^{-s/2} = \sum\limits_{k=0}^{\infty}\frac{\left(\frac{s}{2}\right)_{k}}{k!}x^{k}.
$$
By setting $x = (\cosh u)^{-2}$, one then gets that
$$
(1-(\cosh u)^{-2})^{-s/2} = \sum\limits_{k=0}^{\infty}\frac{\left(\frac{s}{2}\right)_{k}}{k!}(\cosh u)^{-2k}.
$$
Now write
$$
(1-(\cosh u)^{-2})^{-s/2}  =(\cosh u)^{s}((\cosh u)^{2}-1)^{-s/2}= (\cosh u)^{s}(\sinh u)^{-s}
$$
from which we obtain the identity
$$
\sinh^{-s}(u) = \sum\limits_{k=0}^{\infty}\frac{\left(\frac{s}{2}\right)_{k}}{k!}\cosh^{-(s+2k)}(u).
$$
In this way, we can study the function obtained by applying the wave distribution to $g(u) = \sinh^{-s}(u)$,
even though this function does not satisfy the conditions of Theorem \ref{t.wavedistribution}.  Indeed, this observation
is the motivation behind the definition of $E_{X;\rho_{0}^{2}}(z,w;s)$.
\end{rem}

\section{Kronecker limit formulas}

We now prove the Kronecker limit formulas for $G_{X;\rho_0^2}(z,w;s)$ and $E_{X;\rho_0^2}(z,w;s)$,
meaning we analyze the first two terms in the Laurent series at $s=0$.  We will continue assuming
that $\rho_{0} \geq 0$ is arbitrary.   The choice of $\rho_{0}$ plays no role in the analysis
of $G_{X;\rho_0^2}(z,w;s)$.  However, in the approach taken in this section, the case when
$\rho_{0}=1/2$ will be particularly interesting when studying $E_{X;\rho_0^2}(z,w;s)$.

\begin{cor}\label{c.Gasymp}  If $\rho_{0} \neq 0$, then the function $G_{X;\rho_0^2}(z,w;s)$ has the asymptotic behavior
$$
G_{X;\rho_0^2}(z,w;s) = \frac{-1/(2\rho_{0})}{\textrm{\rm vol}_{\omega}(X)}s^{-1} + G_{X;\rho_0^2}(z,w) + \frac{1/(2\rho_{0})^{2}}{\textrm{\rm vol}_\omega (X)}+ O(s)
\,\,\,\,\,
\textrm{as $s \rightarrow 0$}
$$
where $G_{X;\rho_0^2}(z,w)$ is the Green's function associated to the Laplacian $\Delta_X$
acting on the space of smooth functions on $X$ which are orthogonal to the constant functions.
If $\rho_{0}=0$, then we have the expansion
$$
G_{X;\rho_0^2}(z,w;s) = \frac{1}{\textrm{\rm vol}_\omega (X)}s^{-2} + G_{X;\rho_0^2}(z,w) + O(s)
\,\,\,\,\,
\textrm{as $s \rightarrow 0$}
$$
\end{cor}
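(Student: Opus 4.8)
The plan is to read the two stated expansions directly off the descriptions of $G_{X;\rho_0^2}(z,w;s)$ already obtained in Theorem \ref{t.Gseries}. Using $\lambda_j=\rho_0^2+t_j^2$, the denominators in the spectral expansion of Theorem \ref{t.Gseries}(ii) are $(s-\rho_0)^2+t_j^2 = s(s-2\rho_0)+\lambda_j$, so the only eigenvalue producing a singularity at $s=0$ is the zero eigenvalue; its normalized eigenfunction is the constant $\psi_0 \equiv \textrm{\rm vol}_{\omega}(X)^{-1/2}$, so the corresponding term of the expansion is exactly $\textrm{\rm vol}_{\omega}(X)^{-1}\bigl(s(s-2\rho_0)\bigr)^{-1}$. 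Equivalently, this is what one obtains by integrating $\textrm{\rm vol}_{\omega}(X)^{-1}e^{-s(s-2\rho_0)t}$ over $\RR^+$, which is the more convenient way to package it, since by the proof of Theorem \ref{t.Gseries} the function $G_{X;\rho_0^2}(z,w;s)$ is the meromorphic continuation of $\int_0^\infty K_X(z,w;t)\,e^{-s(s-2\rho_0)t}\,dt$.

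Accordingly, I would write, first for $\Re\bigl(s(s-2\rho_0)\bigr)>0$ and then by meromorphic continuation for all $s$,
\[
G_{X;\rho_0^2}(z,w;s) = \frac{1}{\textrm{\rm vol}_{\omega}(X)}\cdot\frac{1}{s(s-2\rho_0)} + \int_0^\infty\Bigl(K_X(z,w;t) - \frac{1}{\textrm{\rm vol}_{\omega}(X)}\Bigr)e^{-s(s-2\rho_0)t}\,dt .
\]
The point is that $K_X(z,w;t)-\textrm{\rm vol}_{\omega}(X)^{-1}$ decays exponentially as $t\to\infty$ (its spectral expansion starts at the smallest positive eigenvalue $\lambda_1$) and, because $z\neq w$, decays exponentially as $t\to 0^+$ as well (section 2.6); hence the remaining integral converges and is holomorphic for $\Re\bigl(s(s-2\rho_0)\bigr)>-\lambda_1$, in particular near $s=0$, where it equals
\[
\int_0^\infty\Bigl(K_X(z,w;t) - \frac{1}{\textrm{\rm vol}_{\omega}(X)}\Bigr)dt = \sum_{\lambda_j>0}\frac{\psi_j(z)\overline{\psi_j}(w)}{\lambda_j} = G_{X;\rho_0^2}(z,w),
\]
the Green's function of $\Delta_X$ on the orthogonal complement of the constants; so this term contributes $G_{X;\rho_0^2}(z,w)+O(s)$ near $s=0$. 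It then remains to expand the elementary rational function $\bigl(s(s-2\rho_0)\bigr)^{-1}$ about $s=0$: for $\rho_0\neq 0$ a one-line partial-fraction computation gives a simple pole with residue $-1/(2\rho_0)$ plus a constant, while for $\rho_0=0$ it is simply $s^{-2}$, with no constant or linear term. Multiplying by $\textrm{\rm vol}_{\omega}(X)^{-1}$ and adding $G_{X;\rho_0^2}(z,w)+O(s)$ produces the two asserted formulas.

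The one genuine subtlety is that the spectral series in Theorem \ref{t.Gseries}(ii) is only conditionally convergent (in the sense of (\ref{e.resolvent.integral})), so one should not simply pass to the limit $s\to 0$ term by term. The plan above avoids this by carrying out the separation and the evaluation at $s=0$ through the heat-kernel integral $\int_0^\infty\bigl(K_X(z,w;t)-\textrm{\rm vol}_{\omega}(X)^{-1}\bigr)e^{-s(s-2\rho_0)t}\,dt$, whose holomorphy near $s=0$ and value there follow transparently from the two-sided exponential decay of the integrand; this is the main, and fairly mild, obstacle. A final bookkeeping point is to check that the decomposition displayed above is consistent with the meromorphic continuation of $G_{X;\rho_0^2}$ constructed in Theorem \ref{t.Gseries}(i), but this is exactly what the last paragraph of the proof of Theorem \ref{t.Gseries} already provides, so no new argument is needed.
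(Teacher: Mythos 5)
Your proposal is correct. The paper's official proof is a one-liner: it cites part (ii) of Theorem \ref{t.Gseries}, isolates the $\lambda_{0}=0$ term whose normalized eigenfunction is $\textrm{vol}_{\omega}(X)^{-1/2}$, and expands $1/(s(s-2\rho_{0}))$ by partial fractions, the remaining part of the spectral expansion at $s=0$ being the Green's function. What you do differently is to justify the regular part through the integral representation $\int_{0}^{\infty}\bigl(K_{X}(z,w;t)-\textrm{vol}_{\omega}(X)^{-1}\bigr)e^{-s(s-2\rho_{0})t}\,dt$, shown to be holomorphic near $s=0$ with value the Green's function there; this is precisely the derivation the authors sketch in the Remark immediately following the corollary, and it has the advantage of sidestepping the conditional convergence of the spectral series in Theorem \ref{t.Gseries}(ii), which the official proof glosses over. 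Two small points of bookkeeping: the difference $K_{X}(z,w;t)-\textrm{vol}_{\omega}(X)^{-1}$ does not decay exponentially as $t\to 0^{+}$ (only $K_{X}$ does; the difference tends to $-\textrm{vol}_{\omega}(X)^{-1}$), but boundedness near $t=0$ is all your convergence and holomorphy argument actually needs; and the identification of the integral at $s=0$ with the series $\sum_{\lambda_{j}>0}\lambda_{j}^{-1}\psi_{j}(z)\overline{\psi_{j}}(w)$ is only conditional when $N\geq 1$, so it is cleaner to take the integral itself as the characterization of the Green's function on the orthogonal complement of the constants, as the paper implicitly does. With those cosmetic repairs, your argument is complete and matches the intended statement in both cases $\rho_{0}\neq 0$ and $\rho_{0}=0$.
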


\begin{proof}
The result follows directly from part (ii) of Theorem \ref{t.Gseries} having noted that the eigenfunction
associated to the zero eigenvalue is $1/(\textrm{\rm vol}_\omega (X))^{1/2}$ and that
$$
\frac{1}{s(s-2\rho_{0})} = \frac{-1/(2\rho_{0})}{s} + \frac{-1/2\rho_{0}}{s-2\rho_{0}} = \frac{-1/(2\rho_{0})}{s} + \frac{1}{(2\rho_{0})^{2}}
+ O(s)
\,\,\,\,\,\textrm{\rm as $s \rightarrow 0$}
$$
in the case $\rho_{0} \neq 0$.  If $\rho_{0}=0$, the assertion follows immediately from part (ii) of Theorem \ref{t.Gseries}.
\end{proof}

\begin{rem}\rm
Corollary \ref{c.Gasymp} is, in some sense, elementary and well-known.  Indeed, using (\ref{e.resolvent.integral}), we can write
$$
G_{X;\rho_0^2}(z,w;s) = \int\limits_{0}^{\infty}\left(K_{X}(z,w;t)e^{-s(s-2\rho_{0})t}- \frac{1}{\textrm{\rm vol}_\omega (X)}\right)dt
+  \frac{1}{s(s-2\rho_{0})}\frac{1}{\textrm{\rm vol}_\omega (X)}.
$$
As $s$ approaches zero while $\Re(s(s-2\rho_0))>0$, the above integral converges to the Green's function $G_{X;\rho_0^2}(z,w)$.
Nonetheless, the novel aspect of Theorem \ref{t.Gseries} is the expression of the resolvent kernel as a series.
\end{rem}

\begin{rem}\rm
The statement of Corollary \ref{c.Gasymp} highlights the difference between the cases when $\rho_{0}=0$ and $\rho_{0}\neq 0$.
The difference determines the order of the singularity of the resolvent kernel $G_{X;\rho_0^2}(z,w;s)$ at $s=0$.  Of course,
one could re-write Corollary \ref{c.Gasymp} as
$$
G_{X;\rho_0^2}(z,w;s) = \frac{1}{\textrm{\rm vol}_\omega (X)}\frac{1}{s(s-2\rho_{0})} + G_{X;\rho_0^2}(z,w) + O(s)
\,\,\,\,\,
\textrm{as $s \rightarrow 0$,}
$$
which includes both $\rho_{0}=0$ and $\rho_{0}\neq 0$.
\end{rem}

\begin{thm}\label{t.G_KLF}
Let $D$ be the divisor of a holomorphic form $F_{D}$ on $X$, and assume that $D$ is smooth up to codimension two in $X$.
Then, for $z\notin D$, there exist constants $c_{0}$ and $c_{1}$ such that
\begin{equation}\label{e.Gseries_KLF}
\int_{D}G_{X;\rho_0^2}(z,w;s)\mu_{D}(w) = \frac{\textrm{\rm vol}_\omega (D)}{\textrm{\rm vol}_\omega (X)}\frac{1}{s(s-2\rho_{0})} +
c_{0}\log \Vert F_{D}(z)\Vert^{2}_{\omega} +c_{1}+ O(s)
\,\,\,\,\,
\textrm{as $s \rightarrow 0$.}
\end{equation}
\end{thm}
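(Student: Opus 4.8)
The plan is to integrate the Laurent expansion of Corollary~\ref{c.Gasymp} termwise over $D$, which reduces (\ref{e.Gseries_KLF}) to the statement that, for $z\notin D$, the integral over $D$ of the \emph{Green's function} equals $c_0\log\Vert F_D(z)\Vert^2_\omega+c_1$; the latter I would then establish by comparing distributional Laplacians on $X$. Concretely, fix $z\notin D$ and start from the heat-kernel representation of the resolvent recorded in the Remark after Corollary~\ref{c.Gasymp},
$$
G_{X;\rho_0^2}(z,w;s)=\int_0^\infty\Bigl(K_X(z,w;t)e^{-s(s-2\rho_0)t}-\frac{1}{\textrm{\rm vol}_\omega(X)}\Bigr)dt+\frac{1}{s(s-2\rho_0)}\frac{1}{\textrm{\rm vol}_\omega(X)},
$$
valid for $\Re(s(s-2\rho_0))>0$. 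Because $d(z,D)>0$, the heat kernel $K_X(z,w;t)$ is bounded for $w\in D$, decays exponentially as $t\to 0^+$ uniformly in $w\in D$, and satisfies $K_X(z,w;t)-\textrm{\rm vol}_\omega(X)^{-1}=O(e^{-\lambda_1 t})$ uniformly, where $\lambda_1>0$ is the smallest positive eigenvalue of $\Delta_X$. These bounds let me interchange $\int_D\mu_D(w)$ with $\int_0^\infty dt$ and then pass to the limit $s\to0$ under the integral, with error $O(s(s-2\rho_0))=O(s)$. Writing $\Theta(z,t):=\int_D K_X(z,w;t)\,\mu_D(w)$, this gives
$$
\int_D G_{X;\rho_0^2}(z,w;s)\,\mu_D(w)=\frac{\textrm{\rm vol}_\omega(D)}{\textrm{\rm vol}_\omega(X)}\frac{1}{s(s-2\rho_0)}+\phi(z)+O(s)\quad\text{as }s\to0,
$$
with $\phi(z):=\int_0^\infty\bigl(\Theta(z,t)-\textrm{\rm vol}_\omega(D)\textrm{\rm vol}_\omega(X)^{-1}\bigr)dt=\int_D G_{X;\rho_0^2}(z,w)\,\mu_D(w)$, where $G_{X;\rho_0^2}(z,w)$ is the Green's function of Theorem~\ref{t.Gseries}(ii).

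It then remains to identify $\phi$ on $X\setminus D$. The function $\phi$ is smooth off $D$, and the standard short-distance estimate for the Green's function shows it grows at most like $\log(1/d(z,D))$ as $z\to D$, so $\phi\in L^1_{\mathrm{loc}}(X)$ and defines a distribution on $X$. Using $\Delta_z G_{X;\rho_0^2}(z,w)=\delta_w-\textrm{\rm vol}_\omega(X)^{-1}$ and Fubini against a test function, I would compute, in $\mathcal{D}'(X)$,
$$
\Delta_z\phi=\delta_D-\frac{\textrm{\rm vol}_\omega(D)}{\textrm{\rm vol}_\omega(X)},
$$
where $\delta_D$ denotes integration over $D$ against $\mu_D$. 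On the other hand, the Poincar\'e--Lelong identity (\ref{ddc}) together with the K\"ahler Laplacian formula (\ref{e.omega.Laplacian}), and the fact that the current $\delta_D\wedge\omega^{N-1}$ coincides up to a positive constant with $\mu_D$ on $D$ (using that the singular locus of $D$ is of complex codimension at least two in $D$, hence $\mu_D$-null and carrying no current mass — this is where the hypothesis on $D$ enters), yield $\Delta_z\log\Vert F_D\Vert^2_\omega=\kappa\,\delta_D-\kappa'$ in $\mathcal{D}'(X)$ for constants $\kappa\neq0$, $\kappa'$; since a distributional Laplacian integrates to zero over the compact $X$, one must have $\kappa'=\kappa\,\textrm{\rm vol}_\omega(D)/\textrm{\rm vol}_\omega(X)$.

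Comparing the last two identities gives $\Delta_z\bigl(\kappa\phi-\log\Vert F_D\Vert^2_\omega\bigr)=0$ in $\mathcal{D}'(X)$, so by elliptic regularity this difference is a smooth harmonic function on the compact connected $X$, hence constant. Therefore $\phi=\kappa^{-1}\log\Vert F_D\Vert^2_\omega+c_1$, and substituting this into the reduction obtained in the first paragraph produces (\ref{e.Gseries_KLF}) with $c_0=\kappa^{-1}$.

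The hard part will be the distributional computation of $\Delta_z\log\Vert F_D\Vert^2_\omega$: one must make the Poincar\'e--Lelong identity and the comparison $\delta_D\wedge\omega^{N-1}\propto\mu_D$ precise across the singular locus of $D$, and verify that both the $\delta_D$-coefficient and the constant term there are correctly proportioned to those of $\Delta_z\phi$ — this proportioning is exactly what forces the two harmonic pieces to differ by a constant rather than merely share the same interior Laplacian. By contrast, the uniform heat-kernel bounds, the Fubini interchanges, and the concluding Liouville-type argument on compact $X$ are routine.
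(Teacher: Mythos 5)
Your proposal is correct in outline and shares the paper's skeleton: integrate the Laurent expansion of Corollary \ref{c.Gasymp} over $D$, then show that $\int_D G_{X;\rho_0^2}(z,w)\,\mu_D(w)$ and $c_0\log\Vert F_D\Vert^2_\omega$ differ by a constant. Where you genuinely diverge is in how that last identification is carried out. The paper works with $\textrm{d}\textrm{d}^c$-currents: it matches the logarithmic singularity of the integrated Green's function against that of $\log\Vert F_D\Vert^2_\omega$ locally near smooth points of $D$ (following the local-coordinate analysis of \cite{JK98}), concludes $D'=D$, and then crosses the codimension-two singular locus with the Riemann removable singularity theorem to get a bounded harmonic, hence constant, difference. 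You instead compute both distributional Laplacians globally on $X$ — $\Delta_z\phi=\delta_D-\textrm{vol}_\omega(D)/\textrm{vol}_\omega(X)$ via Fubini against the Green's function, and $\Delta_z\log\Vert F_D\Vert^2_\omega=\kappa\,\delta_D-\kappa'$ via Poincar\'e--Lelong and Wirtinger's identification of $\delta_D\wedge\omega^{N-1}$ with $\mu_D$ — and then invoke elliptic regularity (Weyl's lemma) to conclude the difference is smooth harmonic, hence constant. Your route buys a cleaner global argument that avoids the explicit local singularity matching, at the cost of having to justify the Lelong--Poincar\'e current identity (no mass on the singular locus, which is where the codimension-two hypothesis enters for you, just as it enters the paper's removable-singularity step); the paper's route leans on the cited local analysis and removable-singularity machinery but never needs the full current-theoretic statement. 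Also, you re-derive the $s\to 0$ expansion of the integral from the heat-kernel representation of the resolvent, whereas the paper gets it directly from the meromorphy in Theorem \ref{t.Gseries}(i) and termwise integration of Corollary \ref{c.Gasymp}; both are adequate. One small caution: check your normalization $\Delta_z G_{X;\rho_0^2}(z,w)=\delta_w-\textrm{vol}_\omega(X)^{-1}$ against the paper's conventions — the proof of Theorem \ref{t.G_KLF} records $\Delta_X G_{X;\rho_0^2}(z,w)=2/\textrm{vol}_\omega(X)$ away from the diagonal, reflecting the sign and factor conventions of \eqref{e.omega.Laplacian} — but since you keep $\kappa$, $c_0$ symbolic this only shifts constants, not the structure of the argument.
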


\begin{proof}
For now, assume that $z\notin D$ and $w \in D$.
By part (i) of Theorem \ref{t.Gseries}, the function $G_{X;\rho_0^2}(z,w;s)$ is holomorphic in
$s$ for $\Re(s) > 2\rho_{0}$, so then the integral in (\ref{e.Gseries_KLF}) exists for $\Re(s) > 2\rho_{0}$.
The integral has a meromorphic continuation in $s$, again by part (i) of Theorem \ref{t.Gseries}, and
the Laurent expansion of the integral near $s=0$ can be evaluated integrating over $D$ the
expansion given in Corollary \ref{c.Gasymp}.
The singularity of the Green's function $G_{X;\rho_0^2}(z,w)$ as $z$ approaches $w$ is known;
see, for example, page 94 of \cite{Fo76} as well as \cite{JK98} and \cite{JK01}.  In the latter references, the authors carefully evaluate
the integrals of functions with Green's function type singularities; see section 3 of \cite{JK98}.  From those arguments, we conclude that
$$
\int_{D}G_{X;\rho_0^2}(z,w;s)\mu_{D}(w)
$$
has a logarithmic singularity as $z$ approaches $D$.

Throughout this discussion the Laplacian $\Delta_{X}$ acts on the variable $z$.  From the equation
$$
(\Delta_X + s(s-2\rho_0)) G_{X;\rho_0^2}(z,w;s) = 0,
$$
as proved in Theorem \ref{t.Gseries}, and the expansion in Corollary \ref{c.Gasymp},
we conclude that for $z \neq w$, we have
\begin{equation}\label{e.Laplacian.G}
\Delta_X G_{X;\rho_0^2}(z,w) = \frac{2}{\textrm{\rm vol}_\omega (X)}.
\end{equation}
Let us consider the difference
\begin{equation}\label{e.difference}
\int_{D}G_{X;\rho_0^2}(z,w;s)\mu_{D}(w) -\frac{\textrm{\rm vol}_\omega (D)}{\textrm{\rm vol}_\omega (X)}\frac{1}{s(s-2\rho_{0})}-
c_{0}\log \Vert F_{D}(z)\Vert^{2}_{\omega}
\end{equation}
near $s=0$.  For any $c_{0}$, the difference is holomorphic in $s$ near $s=0$.
From section \ref{s.forms}, we have that $\Delta_X \log \Vert F_{D}(z)\Vert^{2}_{\omega}$ is a non-zero constant.  Choose $c_{0}$ so
that
$$
c_{0} \Delta_X \log \Vert F_{D}(z)\Vert^{2}_{\omega} = \frac{2}{\textrm{\rm vol}_\omega (X)}.
$$
By combining (\ref{e.Laplacian.G}) and (\ref{e.omega.Laplacian}), we conclude that
$$
\textrm{\rm d}\textrm{\rm d}^{c} \int_{D}G_{X;\rho_0^2}(z,w)\mu_{D}(w) = \delta_{D'} - \omega
$$
where $D'$ is a divisor whose support is equal to the support of $D$.  It remains to show that
$D'=D$.

Consider the difference
\begin{equation}\label{e.difference.KLF}
R_{X;\rho_{0}^{2}}(z;D):= \int_{D}G_{X;\rho_0^2}(z,w)\mu_{D}(w) - \frac{c_{0}}{\textrm{\rm vol}_\omega (X)}\log \Vert F_{D}(z)\Vert^{2}_{\omega}.
\end{equation}
which satisfies
$$
\textrm{\rm d}\textrm{\rm d}^{c} R_{X;\rho_{0}^{2}}(z;D) = \delta_{D'} - n\delta_{D},
$$
which means that $R_{X;\rho_{0}^{2}}(z;D)$ is harmonic away from the support of $D$ and has logarithmic
growth as $z$ approaches $D$.  If $X$ is an algebraic curve, then $D$ is a finite sum of points, say
$D = \sum m_{j}D_{j}$, with multiplicities $m_{j}$.  In this case,
$$
\int_{D}G_{X;\rho_0^2}(z,w)\mu_{D}(w) = \sum m_{j} G_{X;\rho_0^2}(z,D_{j}).
$$
It follows that $D'=D$.  By the Riemann removable singularity theorem, the difference (\ref{e.difference.KLF})
is harmonic on all of $X$, hence bounded, which implies that $R_{X;\rho_{0}^{2}}(z;D)$ is a constant.
The argument for general $X$ is only slightly different.  Again, write $D = \sum m_{j}D_{j}$ where
each $D_{j}$ is  irreducible.  Choose a smooth point $P$ on $D$, hence on some $D_{j}$.
One can express the integral in
(\ref{e.difference.KLF}) near $P$ using suitably chosen local coordinates in $X$, as in section 3 of \cite{JK98}.
By doing so, one again concludes that the value of the integral of $G_{X;\rho_0^2}(z,w)$ as $P$ approaches
$D$ is equal to the coefficient of $D_{j}$.  Therefore, the difference (\ref{e.difference}) is bounded as $z$
approaches $P$.  Since $D$ is smooth in codimension two, we can again apply the Riemann removable singularity theorem
(see Corollary 7.3.2, page 262 of \cite{Kr82}) to conclude that $R_{X;\rho_{0}^{2}}(z;D)$ is bounded and harmonic
on $X$, hence constant.
\end{proof}

\begin{rem}\rm
The constant $c_{0}$ can be expressed as a function of the weight of the form $F_D$.
The constant $c_{1}$ of (\ref{e.Gseries_KLF}) can be determined by integrating both sides of (\ref{e.Gseries_KLF})
with respect to $z$, using that the integral of the Green's function $G_{X;\rho^{2}_{0}}(z,w)$ is zero.  This
will express $c_{1}$ as an integral of $\log \Vert F_{D}(z)\Vert^{2}_{\omega}$.
\end{rem}

\begin{rem}\rm
In effect, the proof of Theorem \ref{t.G_KLF} requires that the norm $\Vert F_{D}\Vert_{\omega}$ is such that
the Laplacian $\Delta_{X}$ of its logarithm is constant, so then a certain linear combination of the Green's function and
$\log \Vert F_{D}\Vert_{\omega}$ has Laplacian equal to zero away from $D$.  This statement can hold in settings not
covered by the conditions stated of Theorem \ref{t.G_KLF}.   In this setting, the forms $F_{D}$ one studies
are determined by the condition that $\log \vert F_{D} \vert $ are harmonic even in the setting when a complex
structure does not exist.  In fact, this requirement is true
for quotients of hyperbolic $n$-spaces  of any dimension $n \geq 2$.
\end{rem}

\begin{rem}\rm
Suppose we are given a codimension one subvariety $D$ of $X$, and assume that $D$ is smooth in codimension
one.  Then one can realize the log norm of the form $F_{D}$ which vanishes along $D$ by (\ref{e.Gseries_KLF}).  In this
manner, we can construct $F_{D}$ when $D$ has been given.  The form $F_{D}$ need not be a holomorphic form,
but rather a section of the canonical bundle twisted by a flat line bundle.  The parameters of the flat
line bundle can be viewed as a generalization of Dedekind sums since classical Dedekind sums stem from
attempting to drop the absolute values from the Kronecker limit function associated to the parabolic Eisenstein
series for $\textrm{\rm PSL}(2,\ZZ)$.
\end{rem}

Let us now extend the development of Kronecker limit functions to $E_{X;\rho_0^2}(z,w;s)$.  To do so,
we first proof that for certain $\rho_{0}^{2}$, the functions $G_{X;\rho_0^2}(z,w;s)$ and $E_{X;\rho_0^2}(z,w;s)$
have the same expansion at $s=0$ out to $O(s^{2})$.

\begin{prop}\label{p.GEdifference} For $z,w \in X$, $z\neq w$ consider the difference
$$
D_{X;\rho_0^2}(z,w;s):=E_{X;\rho_0^2}(z,w;s) - 2^{s+1-\rho_{0}}\frac{\Gamma(s+1-\rho_{0})\Gamma((s+1-2\rho_{0})/2)}
{\Gamma(s)\Gamma(s/2)}G_{X;\rho_0^2}(z,w;s).
$$
Then for all $\rho_{0}\geq 0$, such that $\rho_0\neq m$ or $\rho_0\neq m+1/2,$ for integers $m\geq 1$ we have that $D_{X;\rho_0^2}(z,w;s) = O(s^{2})$ as $s \rightarrow 0$.
\end{prop}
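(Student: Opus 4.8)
The plan is to substitute the series definitions of $E_{X;\rho_0^2}$ and $G_{X;\rho_0^2}$ into $D_{X;\rho_0^2}$ and watch the power and Gamma factors collapse. Since $G_{X;\rho_0^2}(z,w;s)=\frac{2^{-s-1+\rho_0}\Gamma(s)}{\Gamma(s+1-\rho_0)}\sum_{k}\frac{(s/2)_k(s/2+1/2)_k}{k!\,(s+1-\rho_0)_k}K_{X;\rho_0^2}(z,w;s+2k)$, multiplying by $2^{s+1-\rho_0}\Gamma(s+1-\rho_0)\Gamma((s+1-2\rho_0)/2)/\bigl(\Gamma(s)\Gamma(s/2)\bigr)$ cancels the $2^{\pm(s+1-\rho_0)}$, the $\Gamma(s+1-\rho_0)$ and the $\Gamma(s)$, and after subtracting from $E_{X;\rho_0^2}$ one obtains
\begin{equation*}
D_{X;\rho_0^2}(z,w;s)=\frac{\Gamma((s+1-2\rho_0)/2)}{\Gamma(s/2)}\sum_{k=0}^{\infty}\frac{(s/2)_k}{k!}\left(1-\frac{(s/2+1/2)_k}{(s+1-\rho_0)_k}\right)K_{X;\rho_0^2}(z,w;s+2k).
\end{equation*}
Write $A(s)=\Gamma((s+1-2\rho_0)/2)/\Gamma(s/2)$ for the prefactor and $\Sigma(z,w;s)$ for the series, so $D_{X;\rho_0^2}=A\cdot\Sigma$. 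The $k=0$ summand of $\Sigma$ vanishes identically because its bracket equals $1-1$, and for $k\geq 1$ the Pochhammer symbol $(s/2)_k=(s/2)(s/2+1)\cdots(s/2+k-1)$ has a simple zero at $s=0$. Heuristically, then, each summand of $\Sigma$ is $O(s)$, $A(s)$ is $O(s)$, and $D_{X;\rho_0^2}=O(s^2)$; the content of the proof is making this precise.

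The first step is to show that $\Sigma(z,w;s)$ is holomorphic on a neighborhood of $s=0$ for fixed $z\neq w$. For $k\geq 1$ the factor $(s+1-\rho_0)_k$ is nonzero at $s=0$ exactly because $\rho_0$ is not a positive integer, and $K_{X;\rho_0^2}(z,w;s+2k)$ is holomorphic at $s=0$ by the pole description of Corollary~\ref{Lemma K-properties} together with the hypotheses on $\rho_0$; this is precisely where excluding $\rho_0\in\{m,m+1/2:m\geq 1\}$ is used. To pass to the sum I would establish absolute and uniform convergence of $\Sigma$ on a complex neighborhood of $s=0$ by combining the estimate $\frac{(s/2)_k}{k!}K_{X;\rho_0^2}(z,w;s+2k)=O_{s,z,w}(k^{\,s/2-\rho_0-3/2})$ from the proof of Theorem~\ref{E_series} with the Stirling asymptotics $\frac{(s/2+1/2)_k}{(s+1-\rho_0)_k}\sim\frac{\Gamma(s+1-\rho_0)}{\Gamma((s+1)/2)}\,k^{\rho_0-(s+1)/2}$: the $k$-th summand of $\Sigma$ is then $O(k^{-2})$ when $\rho_0>1/2$ (the growth of the Pochhammer ratio being exactly compensated by the decay of the other factor) and $O(k^{\Re(s)/2-\rho_0-3/2})$ when $0\leq\rho_0\leq 1/2$ (the bracket staying bounded), each summable uniformly near $s=0$ since $\rho_0\geq 0$. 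Consequently $\Sigma$ is holomorphic at $s=0$, its Taylor coefficients there may be computed term by term, and since every summand vanishes at $s=0$ we get $\Sigma(z,w;s)=O(s)$ as $s\to 0$ (and the same argument, with one further derivative, controls $\Sigma'(0)$).

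It remains to treat the prefactor $A(s)$. As $1/\Gamma(s/2)=\frac{s}{2}+O(s^2)$ and $\Gamma((s+1-2\rho_0)/2)$ is holomorphic and nonzero at $s=0$ whenever $(1-2\rho_0)/2\notin\{0,-1,-2,\dots\}$, i.e. whenever $\rho_0\notin\{\tfrac12,\tfrac32,\dots\}$, one gets $A(s)=O(s)$ for every admissible $\rho_0\neq 1/2$, so $D_{X;\rho_0^2}=A\cdot\Sigma=O(s)\cdot O(s)=O(s^2)$. The value $\rho_0=1/2$ must be handled separately: there $A(s)=\Gamma(s/2)/\Gamma(s/2)\equiv 1$, but now $(1-\rho_0)_k=(1/2)_k$ coincides with the value of $(s/2+1/2)_k$ at $s=0$, so the bracket in $\Sigma$ also vanishes at $s=0$, forcing each summand — hence $\Sigma$ — to vanish to order at least two; thus $D_{X;\rho_0^2}=1\cdot O(s^2)=O(s^2)$ again. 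The main obstacle is the uniform-convergence step: one must track the exact interplay between the bound $O(k^{\,s/2-\rho_0-3/2})$ — which already packages the cancellations responsible for the meromorphic continuations of $E_{X;\rho_0^2}$ and $G_{X;\rho_0^2}$ — and the growth or decay of the Pochhammer ratio, separating the regimes $\Re(s)>0$, $\Re(s)=0$ and $\Re(s)<0$ near the origin; a secondary but genuinely necessary point is the verification that each $K_{X;\rho_0^2}(z,w;s+2k)$ is regular at $s=0$, which is exactly what forces the exclusion of $\rho_0=m$ and $\rho_0=m+1/2$ for $m\geq 1$.
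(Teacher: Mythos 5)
Your proposal is correct and follows essentially the same route as the paper: after cancelling the $k=0$ term, you write the $k$-th term of the difference as $\frac{\Gamma((s+1-2\rho_{0})/2)}{\Gamma(s/2)}\frac{(s/2)_{k}}{k!}\bigl(1-\frac{(s/2+1/2)_{k}}{(s+1-\rho_{0})_{k}}\bigr)K_{X;\rho_0^2}(z,w;s+2k)$, extract the double zero from $\Gamma^{-1}(s/2)$ together with $(s/2)_{k}$ in the generic case, and treat $\rho_{0}=1/2$ separately via the vanishing of the bracket at $s=0$, exactly as in the paper. The only difference is that you make explicit the uniform convergence of the combined series near $s=0$ (via the $O(k^{-2})$-type estimates), a point the paper leaves implicit.
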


\begin{proof}
The factor
\begin{equation}\label{G factor}
2^{s+1-\rho_{0}}\frac{\Gamma(s+1-\rho_{0})\Gamma((s+1-2\rho_{0})/2)}
{\Gamma(s)\Gamma(s/2)}
\end{equation}
of $G_{X;\rho_0^2}(z,w;s)$ was chosen so that the $k=0$ term in the series
expansions for $E_{X;\rho_0^2}(z,w;s)$ and $G_{X;\rho_0^2}(z,w;s)$ agree.  For
any $k \geq 1$, the $k$-th term in the series expansion for the difference $D_{X;\rho_0^2}(z,w;s)$
is
\begin{equation}\label{e.k_th_term}
\frac{\Gamma((s+1-2\rho_{0})/2)}{\Gamma(s/2)}\frac{\left( \frac{s}{2}\right) _{k}}{k!}
\left( 1-  \frac{\left( \frac{s}{2}+ \frac{1}{2}\right) _{k}}{(s+1-\rho_0)_k} \right)K_{X;\rho_0^2}(z,w;s+2k)
\end{equation}
From the spectral expansion in Proposition \ref{p.spectralW}, the function $K_{X;\rho_{0}}(z,w;s+2k)$
is holomorphic at $s=0$ for any integer $k\geq 1$. When $\rho_0$ is distinct from any positive half-integer, the function $\Gamma((s+1-2\rho_{0})/2)$ is also holomorphic at $s=0$, while $\frac{\left( \frac{s}{2}+ \frac{1}{2}\right) _{k}}{(s+1-\rho_0)_k}$ is holomorphic at $s=0$ for $\rho_0$ distinct from any positive integer. Since there is a factor of $\Gamma^{-2}(s/2)$ in
the above coefficient, it follows that the function $D_{X;0}(z,w;s)$ is $O(s^{2})$ as $s$ approaches zero, for all $\rho_0$ different from positive integers or half-integers.

It remains to prove the statement for $\rho_{0}=1/2$, in which case (\ref{e.k_th_term}) becomes
$$
\frac{\left( \frac{s}{2}\right) _{k}}{k!}
\left( 1-  \frac{\left( \frac{s}{2}+ \frac{1}{2}\right) _{k}}{(s+1/2)_k} \right)K_{X;1/4}(z,w;s+2k)
= \frac{\left( \frac{s}{2}\right) _{k}}{k!}
\left(\frac{(s+1/2)_k -
\left( \frac{s}{2}+ \frac{1}{2}\right) _{k}}{(s+1/2)_k} \right)K_{X;1/4}(z,w;s+2k)
$$
For $k \geq 1$, the factor $(s/2)_{k}$ vanishes at $s=0$, as does the difference $(s+1/2)_{k} - (s/2+1/2)_{k}$,
so it follows that the function $D_{X;1/4}(z,w;s)$ is $O(s^{2})$ as $s$ approaches zero.
\end{proof}

\begin{cor}\label{c.Easymp}  For $z,w\in X$, $z\neq w$ and $\rho_0> 0$ which is not equal to a positive integer or half-integer, $E_{X;\rho_0^2}(z,w;s)=O(s)$, as $s \to 0$. When $\rho_0=1/2$, there are constants $b_{0}$, $b_{1}$ and $b_2$ such that the
function $E_{X;1/4}(z,w;s)$ has the asymptotic behavior
$$
E_{X;1/4}(z,w;s) = b_{0} + (b_{1} + b_2 G_{X;1/4}(z,w))s + O(s^{2})
\,\,\,\,\,
\textrm{as $s \rightarrow 0$}
$$
where $G_{X;\rho_0^2}(z,w)$ is the Green's function associated to the Laplacian $\Delta_X$
acting on the space of smooth functions on $X$ which are orthogonal to the constant functions.
\end{cor}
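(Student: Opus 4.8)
The plan is to derive everything from Proposition~\ref{p.GEdifference} together with the Laurent expansion of $G_{X;\rho_0^2}(z,w;s)$ at $s=0$ recorded in Corollary~\ref{c.Gasymp}. Write $\Phi_{\rho_0}(s)$ for the Gamma-quotient~(\ref{G factor}); then Proposition~\ref{p.GEdifference} says precisely that
$$
E_{X;\rho_0^2}(z,w;s) = \Phi_{\rho_0}(s)\, G_{X;\rho_0^2}(z,w;s) + O(s^2)
\qquad \text{as } s \to 0 ,
$$
so the corollary reduces to understanding the product $\Phi_{\rho_0}(s)\, G_{X;\rho_0^2}(z,w;s)$ near $s=0$. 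First I would analyze $\Phi_{\rho_0}(s)$ by itself: the denominator $\Gamma(s)\Gamma(s/2)$ contributes a double pole at $s=0$, while the numerator $2^{s+1-\rho_0}\Gamma(s+1-\rho_0)\Gamma((s+1-2\rho_0)/2)$ is holomorphic and nonzero at $s=0$ as soon as $\rho_0$ is neither a positive integer (so $\Gamma(s+1-\rho_0)$ is regular) nor a positive half-integer (so $\Gamma((s+1-2\rho_0)/2)$ is regular). Hence, for such $\rho_0$, the factor $\Phi_{\rho_0}(s)$ vanishes to order exactly two at $s=0$.

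For the first assertion, since $\rho_0>0$, Corollary~\ref{c.Gasymp} gives $G_{X;\rho_0^2}(z,w;s)$ a simple pole at $s=0$. Therefore $\Phi_{\rho_0}(s)\, G_{X;\rho_0^2}(z,w;s) = O(s)$, and adding the $O(s^2)$ error yields $E_{X;\rho_0^2}(z,w;s) = O(s)$, as claimed.

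For the second assertion the key observation is that when $\rho_0 = 1/2$ one has $(s+1-2\rho_0)/2 = s/2$, so the factor $\Gamma((s+1-2\rho_0)/2)$ cancels the $\Gamma(s/2)$ in the denominator and
$$
\Phi_{1/2}(s) = 2^{\,s+1/2}\,\frac{\Gamma(s+1/2)}{\Gamma(s)} = \sqrt{2\pi}\, s + O(s^2)
\qquad \text{as } s \to 0 ;
$$
that is, $\Phi_{1/2}$ now has only a \emph{simple} zero at $s=0$. Combining this with Corollary~\ref{c.Gasymp} specialized to $\rho_0=1/2$,
$$
G_{X;1/4}(z,w;s) = \frac{-1}{\textrm{\rm vol}_{\omega}(X)}\, s^{-1} + \Bigl(G_{X;1/4}(z,w) + \frac{1}{\textrm{\rm vol}_{\omega}(X)}\Bigr) + O(s) ,
$$
I would multiply the two expansions. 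The $s^{-1}$ term of $G_{X;1/4}$ against the leading $\sqrt{2\pi}\,s$ of $\Phi_{1/2}$ produces the constant $b_0 = -\sqrt{2\pi}/\textrm{\rm vol}_{\omega}(X)$; the coefficient of $s$ collects the product of $\sqrt{2\pi}\,s$ with the constant term of $G_{X;1/4}$ (giving $\sqrt{2\pi}\,G_{X;1/4}(z,w)$ together with a $(z,w)$-independent constant) and the product of the $O(s^2)$ term of $\Phi_{1/2}$ with the $s^{-1}$ term of $G_{X;1/4}$ (another $(z,w)$-independent constant). This yields exactly
$$
E_{X;1/4}(z,w;s) = b_0 + (b_1 + b_2\, G_{X;1/4}(z,w))\, s + O(s^2) ,
$$
with $b_2 = \sqrt{2\pi}$; carrying the Taylor expansion of $\Phi_{1/2}$ one further step makes $b_1$ explicit, though the statement needs only its existence, and the $O(s^2)$ error from Proposition~\ref{p.GEdifference} is absorbed harmlessly.

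I do not expect a serious obstacle beyond careful bookkeeping of the orders of vanishing and the poles of the Gamma factors. The one point that must be gotten exactly right is the cancellation $\Gamma((s+1-2\rho_0)/2) = \Gamma(s/2)$ that occurs precisely at $\rho_0 = 1/2$: it drops the order of vanishing of $\Phi_{\rho_0}$ at $s=0$ from two to one, and this is exactly what promotes the $s$-coefficient of $E_{X;1/4}(z,w;s)$ to involve the Green's function $G_{X;1/4}(z,w)$ rather than vanishing.
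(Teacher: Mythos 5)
Your proposal is correct and follows essentially the same route as the paper: show the Gamma-factor (\ref{G factor}) vanishes to order two at $s=0$ when $\rho_0$ avoids positive integers and half-integers (giving $O(s)$ against the simple pole of $G_{X;\rho_0^2}(z,w;s)$), and for $\rho_0=1/2$ exploit the cancellation reducing it to $2^{s+1/2}\Gamma(s+1/2)/\Gamma(s)=\sqrt{2\pi}\,s+O(s^2)$, then multiply by the Laurent expansion from Corollary \ref{c.Gasymp}. Your bookkeeping is in fact slightly more careful than the paper's (you carry the correct sign $-1/(2\rho_0)$ in the $s^{-1}$ term and make $b_2=\sqrt{2\pi}$ explicit), but the argument is the same.
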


\begin{proof} When $\rho_0> 0$ is not equal to a positive integer or half-integer,  the functions $\Gamma(s+1-\rho_{0})$ and $\Gamma((s+1-2\rho_{0})/2)$ are holomorphic at $s=0$, hence the factor \eqref{G factor} is $O(s^2)$ as $s$ approaches zero. Combining this with Proposition \ref{p.GEdifference} and Corollary \ref{c.Gasymp} yields the statement.

When $\rho_0= 1/2$
$$
2^{s+1-\rho_{0}}\frac{\Gamma(s+1-\rho_{0})\Gamma((s+1-2\rho_{0})/2)}
{\Gamma(s)\Gamma(s/2)} =
2^{s+1/2}\frac{\Gamma(s+1/2)}{\Gamma(s)}= a_1 s + a_2 s^2 + O(s^3)
$$
so the statement of Proposition \ref{p.GEdifference} becomes
$$
E_{X;1/4}(z,w;s)= (a_1s + a_2s^2 + O(s^3))\left( \frac{1}{\textrm{\rm vol}_\omega (X)}s^{-1} + G_{X;1/4}(z,w) + O(s) \right) + O(s^2),
$$
as $s\to 0$. Multiplying the above expression we deduce the statement.
\end{proof}

\begin{rem}\rm
When $\rho_0=0$ the term in \eqref{G factor} is $\sqrt{\pi} s^2 + O(s^3)$ as $s \to 0$. When combined with Proposition \ref{p.GEdifference}
and Corollary \ref{c.Gasymp}, one gets that
$$
E_{X;0}(z,w;s)=\sqrt{\pi}/\mathrm{Vol}_\omega (X)+ O(s^2)
\,\,\,\,\,
\textrm{\rm as $s \rightarrow 0$.}
$$
\end{rem}

\begin{rem}\rm
In the case when $X$ is a hyperbolic Riemann surface, the result of Proposition \ref{p.GEdifference}
is stated in Corollary 7.4 of \cite{vP16}, with a slightly different renormalization constant $\sqrt{2\pi}$ in front of the Green's function, which stems from a different constant term in the definition of the corresponding series.  However, in their proof, the author used special
function identities which are specific to that setting.
\end{rem}

\begin{rem}\rm \label{r.E_KLF}
The constants $b_{0}$, $b_{1}$ and $b_2$ in case when $\rho_0= 1/2$ are easily evaluated using asymptotic behavior of the factor \eqref{G factor} near $s=0$, which are not so significant to us at this point.
What does matter is that for $\rho_0=1/2$, we have that $E_{X;1/4}(z,w;s)$ admits a Kronecker limit formula.  In the notation of Theorem \ref{t.G_KLF}, there are constants $c_{0}$, $c_{1}$ and
$c_{2}$ such that
$$
\int_{D}E_{X;1/4}(z,w;s)d\mu_{D}(w) = c_{0}\textrm{\rm vol}_\omega (D) + \left(c_{1}\log \Vert F_{D}(z)\Vert^{2}_{\mu} +c_{2}\right)s+ O(s^{2})
\,\,\,\,\,
\textrm{as $s \rightarrow 0$}.
$$
\end{rem}

\begin{rem}\rm
It is important to note that we have not excluded the possibility of a ``nice'' Kronecker limit function for
$E_{X;\rho_{0}^{2}}(z,w;s)$ when $\rho_{0} \neq 1/2$.  The approach we took in this article was to compare
the Kronecker limit function of $E_{X;\rho_{0}^{2}}(z,w;s)$ to that of the resolvent kernel $G_{X;\rho_{0}^{2}}(z,w;s)$.
We find it quite interesting that the comparison yields a determination of the Kronecker limit function of
$E_{X;\rho_{0}^{2}}(z,w;s)$ only in the case when $\rho_{0}=1/2$.
\end{rem}

\begin{rem}\rm
Ultimately, we are interested in the cases when $X$ is the quotient of a symmetric space $G/K$. In this
setting, $\rho_{0}$ is zero only when $G/K$ is Euclidean.  In all other cases, $\rho_{0}^2$ is positive. (See section 1.3.) 
\end{rem}

\section{Examples}
\label{s.Examples}

As stated above, we began our analysis with the heat kernel and obtained our results using its
spectral expansion.  As one could imagine, any other representation of the heat kernel has the potential
of combining with our results to yield formulas of possible interest.  We will proceed along these lines
and introduce three examples.  It is our opinion that each example is of independent interest.  Rather
than expanding upon any one example, we will present, in rather broad strokes, the type of formulas which
will result, and we will leave a detailed analysis for future work.

\subsection{Abelian varieties}
Let $\Omega$ be an $N\times N$ complex matrix which is symmetric and whose imaginary part is postive
definite.  Let $\Lambda_{\Omega}$ denote the $\ZZ$-lattice formed
by vectors in $\ZZ^{N}$ and $\Omega \ZZ^{N}$.  Let $X$ be an abelian variety whose
complex points form the $N$-dimension complex torus $\CC^{N}/(\ZZ^{N} \otimes \Omega \ZZ^{N})$.  Assume
that $X$ is equipped with its natural flat metric induced from the Euclidean metric on $\CC^{N}$.  It
can be shown that all eigenfunctions on the associated Laplacian are exponential functions.  In addition,
the heat kernel on $X$ can be obtained by periodizing over $\Lambda_{\Omega}$ the heat kernel on $\CC^{N}$.
By the uniqueness of the heat kernel on $X$, one obtains a formula of the type
$$
K_X(z,w; t) = \sum_{k = 0}^\infty e^{-\la_k t} \psi_k(z) \psi_k(w) = \sum\limits_{v \in \Lambda_{\Omega}}
\frac{1}{(4\pi t)^{N}}e^{-\Vert z - w - v\Vert^{2}/(4t)}
$$
where $\Vert \cdot \Vert$ denotes the absolute value in $\CC^{N}$.  In effect, the identity obtained by
equating the above two expressions for the heat kernel is the Poisson summation formula.  In the setting of section 3, we take
$\rho_{0}^{2}=0$, so then the Poisson kernel (\ref{e.DefPoissonKernel}) becomes
\begin{equation}\label{e.P_torus}
\cP_{X, 0}(z, w; u) = \frac{u}{\sqrt{4\pi}} \int_0^\infty
K_X(z, w; t) e^{-u^2/(4t)} t^{-1/2} \, \frac{dt}{t}
= \sum\limits_{v \in \Lambda_{\Omega}}\frac{u \Gamma(N+1/2)}{\pi(u^{2} + \Vert z - w - v\Vert^{2})^{N+1/2}}.
\end{equation}

As is evident, one cannot simply replace $u$ by $iu$ in (\ref{e.P_torus}) since then the sum would have singularities
whenever $u^{2} = \Vert z - w - v\Vert^{2}$.  However, this is where the distribution theory approach is necessary and,
indeed, one will obtain the function $K_{X;0}(z,w;s)$.  For now, one can formally express $K_{X;0}(z,w;s)$ as the
integral of $\cosh^{-s}(u)$.  In the notation of Theorem \ref{t.G_KLF}, one can take $D$ to be the theta divisor of the
Riemann theta function $\Theta$ on $X$.  The Kronecker limit formula for $\log \Vert \Theta \Vert$ then could be
viewed as coming from the series over $\Lambda_{\Omega}$.  Upon exponentiation, one would have a product formula, or
regularized product, formula for $\Vert \Theta \Vert^{2}$.  Certainly, the exploration of this example is worthy of study.

\subsection{Complex projective space}
Let $\om_{FS}$ denote the Fubini-Study metric on complex projective space $\CC\PP^n$.  The authors
in \cite{HI02} derived an explicit expression for the heat kernel $K_{\CC\PP^n}$ associated to
the Laplacian of the Fubini-Study metric on $\CC\PP^n$.  Specifically, it is proved that
\beeq
\label{e.HeatinCPn}
K_{\CC\PP^n} (z, w; t)= \frac{e^{n^2t}}{2^{n-2}\pi^{n+1}} \int_r^{\pi/2} \frac{-d(\cos u)}{\sqrt{\cos^2 r - \cos^2u}}
\left( -\frac{1}{\sin u} \frac{d}{du} \right)^n [\Theta_{n+1}(t,u)],
\eeeq
where $z, w \in \CC\PP^n$, $t > 0$, and $r  = \textrm{\rm dist}_{g_{FS}}(z, w) = \tan^{-1}(|z - w|)$, and the function $\Theta_{n+1}(t,u)$
is given by
$$
\Theta_{n+1}(t,u) = \sum_{\ell = 0}^\infty e^{-4t(\ell + n/2)^2} \cos((2\ell + n)u).
$$
Equivalently, one can write
\beeq
\label{e.goodheat}
K_{\CC\PP^n} (z, w, t)= \sum_{\ell = 0}^\infty e^{-\la_\ell t}
 \theta_\ell(r),
\eeeq
where $\la_\ell  = 4\ell(\ell + n)$, and
\beeq
\label{e.thetaell}\notag
\theta_\ell(r) =  \frac{1}{2^{n-2}\pi^{n+1}}\int_r^{\pi/2} \frac{\sin \tau}{\sqrt{\cos^2 r - \cos^2\tau}}
\left( -\frac{1}{\sin \tau} \frac{d}{d\tau} \right)^n \cos((2\ell + n)\tau) \, d\tau.
\eeeq

As in the previous example, the formula for the heat kernel is explicit, and all
integral transforms leading up to the resolvent kernel $G_{X;\rho^{2}}(z,w;s)$ and
$E_{X;\rho^{2}}(z,w;s)$ can be evaluated, at least formally.  It seems as if one
would also take $\rho_{0}^{2}=0$ in this case, though it would be worthwhile to
consider $\rho_{0}^{2}=1/2$ as well.  Of course, the divisors to consider would be the zeros
of homogenous polynomials in $N$-variables, and the norm of homogenous polynomials would be
with respect to the Fubini-Study metric.

\subsection{Compact quotients of symmetric spaces}

Let $G$ be a connected, non-compact semisimple Lie group with finite
centrer, and let $K$ be its maximal compact subgroup. Let $\Gamma$ be a
discrete subgroup of $G$ such that the quotient $\Gamma \setminus
G$ is compact. Then the quotient space $X=\Gamma \setminus G / K$
is also compact.

On page 160 of \cite{Ga68}, the author presents a formula for the heat kernel on $G$.  In
general terms, the heat kernel $K_{G}(g;t) $ with singularity when $g$ is the identity,
is equal to the inverse spherical transform of a Gaussian; see, Proposition 3.1 as well as
\cite{JLa01} in the case $G = \textrm{\rm SL}_{n}(\mathbb R)$.

 In the case that $G$ is complex, the inverse
transform can be computed and the resulting formula is particularly elementary;
see Proposition 3.2 of \cite{Ga68}.  In this case, one has that $\rho_{0}^{2}$
is equal to the norm of $1/2$ of the sum of the positive roots of the Lie algebra of $G$.

The heat kernel on $X$ can be written, as in the notation of (4.2) of \cite{Ga68},
as the series
$$
K_{X}(z,w;t) = \sum\limits_{\gamma \in \Gamma}K_{G}(z^{-1}\gamma w;t).
$$
The expressions from Proposition 3.1 and Proposition 3.2 of \cite{Ga68} are such that
the integral in (\ref{e.DefPoissonKernel}) can be computed term-by-term.  As discussed
in section 2.5, one should replace $t$ by $t/(4\rho_{0}^{2})$ so then one has the Kronecker
limit theorem as in Remark \ref{r.E_KLF}.  One can be optimistic  that the case of
general $G$ will not be significantly different from $G=\textrm{\rm SL}_{2}({\mathbb R})$.

\subsection{Concluding remarks}

Though we began with the assumption that $X$ is a K\"ahler variety, one could review
the proofs we developed and relax this condition.  For example, if $X$ is a hyperbolic
$n$-manifold, meaning the compact quotient of $\textrm{\rm SO}(n,1)$, then the structure
of the Laplacian associated to the natural hyperbolic metric is such that all aspects of
our proofs apply.  In this case, the Kronecker limit function associated to $G_{X;\rho_{0}^{2}}(z,w;s)$
would be a harmonic form with a singularity when $z$ approaches $w$.  Furthermore, the
heat kernel on the hyperbolic $n$-space has a particularly elementary expression; see, for example,
\cite{DGM76} who attribute the result to Millson.  In this case, $\rho_{0}^{2} \neq 0$, so then
would expect, as in the case when $n=2$, a generalization of the elliptic Eisenstein series as
a sum over the uniformizing group.  The study of Poincar\'e series associated to $\textrm{\rm SO}(n,1)$
is developed in \cite{CLPS91}, and it will be interested to connect those results with the
non-$L^{2}$ series $E_{X;\rho_{0}^{2}}(z,w;s)$.

Finally, we began with the heat kernel acting on smooth functions.  Certainly, one could
follow the same construction when using a form-valued heat kernel.  By doing so, one would
perhaps not consider the resolvent kernel, but rather focus on $K_{X;\rho_{0}^{2}}(z,w;s)$.
In this case, one would integrate one of the variables over a cycle $\gamma$ on $X$, as in section 5 of
\cite{JvPS16}, and study the resulting Kronecker limit function.  It seems plausible to
expect that in this manner one would obtain a direct generalization of \cite{KM79}, whose
series admitted a Kronecker limit function which was the Poincar\'e dual to the $\gamma$.

\noindent
James W. Cogdell \\
Department of Mathematics \\
Ohio State University \\
231 W. 18th Ave. \\
Columbus, OH 43210 \\
U.S.A. \\
e-mail: cogdell@math.ohio-state.edu

\vspace{5mm}\noindent
Jay Jorgenson \\
Department of Mathematics \\
The City College of New York \\
Convent Avenue at 138th Street \\
New York, NY 10031\\
U.S.A. \\
e-mail: jjorgenson@mindspring.com

\vspace{5mm}\noindent
Lejla Smajlovi\'c \\
Department of Mathematics \\
University of Sarajevo\\
Zmaja od Bosne 35, 71 000 Sarajevo\\
Bosnia and Herzegovina\\
e-mail: lejlas@pmf.unsa.ba

\end{document}